\definecolor{dkblue}{cmyk}{1,.54,.04,.19} 
\theoremstyle{plain}
\newtheorem{theorem}{Theorem}
\newtheorem{lemma}[theorem]{Lemma}
\newtheorem{proposition}[theorem]{Proposition}
\newtheorem{corollary}[theorem]{Corollary}
\theoremstyle{definition}
\newtheorem{remark}[theorem]{Remark}
\theoremstyle{remark}
\newcommand{\cK}{\mathcal K}
\newcommand{\cD}{\mathcal D}
\newcommand{\cF}{\mathscr F}
\newcommand{\cE}{\mathcal E}
\newcommand{\R}{\mathbb R}
\newcommand{\norm}[1]{\vert #1 \vert}
\newcommand{\ip}[1]{\langle #1 \rangle}
\newcommand{\argmin}{\operatorname{arg\,min}}
\newcommand{\argmax}{\operatornamewithlimits{arg\,max}}
\renewcommand{\d}[1]{\operatorname{d}\!#1}
\newcommand{\aff}{\operatorname{aff}}
\newcommand{\E}{\mathbb E}
\newcommand{\Reg}{\mathfrak{R}}
\newcommand{\BReg}{\mathfrak{BR}}
\newcommand{\ceil}[1]{\left\lceil {#1} \right\rceil}
\newcommand{\vol}{\operatorname{vol}}
\newcommand{\zero}{\bm{0}}
\newcommand{\diam}{\operatorname{diam}}
\newcommand{\dist}{\operatorname{dist}}
\newcommand{\cG}{\mathcal G}
\newcommand{\cJ}{\mathcal J}
\newcommand{\cC}{\mathcal C}
\newcommand{\const}{{\normalfont \textrm{const}}}
\newcommand{\sind}{\bm{1}}
\newcommand{\bbP}{\mathbb P}
\newcommand{\Psip}{\tilde \Psi}
\begin{document}

\title{\large \textsc{Improved Regret for Zeroth-Order Adversarial Bandit Convex Optimisation}}
\author{\large Tor Lattimore \\[0.3cm] DeepMind, London \\ {\tt lattimore@google.com}}
\date{}

\maketitle

\noindent \textbf{Abstract}\,\, We prove that the information-theoretic upper bound on the minimax regret 
for zeroth-order adversarial bandit convex optimisation is at most $O(d^{2.5} \sqrt{n} \log(n))$, where $d$ is the dimension and $n$ is the number
of interactions. This improves on the bound of $O(d^{9.5} \sqrt{n} \log(n)^{7.5})$ by Bubeck et al.\ (2017).
The proof is based on identifying an improved exploratory distribution for convex functions. \\[-0.3cm]

\noindent\textit{Mathematics Subject Classification} (2010). 52A20; 62C05. 

\noindent\textit{Keywords.} Bandit convex optimisation, online learning. 

\section{Introduction}

Let $\cK \subset \R^d$ be a convex body (convex, compact with non-empty interior) and $\cG$ be a set of convex functions from $\cK$ to $[0,1]$.
At the start of the game, an adversary secretly chooses a sequence $(f_t)_{t=1}^n$ with $f_t \in \cG$.
Then, in each round $t$, the learner chooses an action $x_t \in \cK$, possibly with randomisation, and observes only the loss $f_t(x_t)$.
The minimax regret over $n$ rounds is
\begin{align}
\Reg_n^\star(\cG) = \inf_{\text{policies}} \sup_{(f_t)_{t=1}^n \in \cG^n} \max_{x \in \cK} \E\left[\sum_{t=1}^n f_t(x_t) - f_t(x)\right]\,,
\label{eq:regret}
\end{align}
where the inf is over all policies of the learner that determine the actions $(x_t)_{t=1}^n$.
The expectation integrates over the randomness of the actions $(x_t)_{t=1}^n$.
Our contribution is a proof of the following theorem.

\begin{theorem}\label{thm:regret}
Suppose that $\cK$ contains a unit-radius Euclidean ball and $\cG$ is the set of all convex functions from $\cK$ to $[0,1]$. Then
\begin{align*}
\Reg_n^\star(\cG) \leq \const \cdot d^{2.5} \sqrt{n} \log\left(n \diam(\cK)\right)\,,
\end{align*}
where $\const$ is a universal constant and $\diam(\cK) = \max_{x, y \in \cK} |x - y|$ is the diameter of $\cK$ with $|\cdot|$ the standard Euclidean norm.
\end{theorem}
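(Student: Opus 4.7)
The plan is to use the information-theoretic framework of Russo--Van Roy for Bayesian bandits, as specialised to bandit convex optimisation by Bubeck and Eldan. First I would pass to the Bayesian setting via a minimax argument: the quantity in~\eqref{eq:regret} is at most the worst-case Bayesian regret $\BReg_n$ taken over priors on $\cG^n$. A standard discretisation step replaces $\cK$ by an $\epsilon$-net of cardinality $O((\diam(\cK)/\epsilon)^d)$, and taking $\epsilon = 1/n$ together with a Lipschitz estimate for convex functions on a slightly shrunken copy of $\cK$ introduces only additive $O(1)$ regret while bounding the entropy of the optimal action $x^*$ by $H(x^*) \leq O(d \log(n\diam(\cK)))$.

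The backbone estimate is the familiar
\[
\BReg_n \leq \sqrt{n \cdot H(x^*) \cdot \sup_t \alpha_t}\,,
\]
where $\alpha_t$ is the information ratio of the sampling distribution $q_t$ chosen by the learner in round $t$. I would take $q_t$ to be a mixture of the exploitation distribution (the posterior over $x^*$) and a carefully designed exploratory distribution $\nu_t$ adapted to the geometry of the current posterior; the choice of $\nu_t$ is where the main novelty sits. Intuitively, $\nu_t$ should place mass in directions where the posterior over $x^*$ is broad, at a scale small enough to keep the instantaneous regret controlled but large enough that an observation $f_t(X)$ with $X \sim \nu_t$ carries substantial information about $x^*$ whenever $f_t$ is not nearly constant on the posterior support.

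The main obstacle is then the information-ratio bound: I would aim to prove that, for any convex $f_t : \cK \to [0,1]$,
\[
\bigl(\E_{X \sim q_t}[f_t(X) - f_t(x^*)]\bigr)^2 \leq O(d^4 \log n) \cdot I\bigl(x^*; (X, f_t(X))\bigr)\,.
\]
This is a purely convex-geometric statement relating regret to information. The natural strategy is to dichotomise: if the exploitation regret is already small, the bound is vacuous; otherwise convexity of $f_t$ combined with the spread of the posterior should produce a one-dimensional affine lower bound on $f_t$ along a direction of large posterior variance. Coupling this with a quadratic lower bound on KL divergence between the one-dimensional distributions of $f_t(X)$ under posterior-conditioned and marginal sampling converts regret into mutual information. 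Inserting this bound into the backbone estimate yields
\[
\BReg_n \leq \sqrt{n \cdot O(d \log(n\diam(\cK))) \cdot O(d^4 \log n)} = O\bigl(d^{2.5}\sqrt{n}\log(n\diam(\cK))\bigr)\,,
\]
which is the claim. I expect essentially all the technical weight of the proof to fall on the explicit construction of $\nu_t$ and on the convex-analytic argument that underlies the information-ratio bound; the surrounding steps are largely bookkeeping.
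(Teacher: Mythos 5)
The scaffolding you describe (minimax duality, discretisation to an $\epsilon$-net, bounding $\BReg_n$ via the Russo--Van Roy information ratio, and the final $\sqrt{n\cdot H(x^\star)\cdot\beta}$ computation with $H(x^\star)=O(d\log(n\diam\cK))$ and $\beta=O(d^4\log n)$) is indeed the same skeleton the paper uses; your arithmetic matching $d^{2.5}$ is right. But the entire substance of the theorem is the proof that such an exploratory distribution with $\beta=O(d^4\log(\cdot))$ exists for \emph{every} posterior over convex functions, and on this point your proposal offers only a one-sentence heuristic that does not survive scrutiny and does not resemble what the paper actually does.

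Concretely, your sketch says: find a direction of large posterior variance, obtain a one-dimensional affine lower bound on $f_t$ there, and convert regret to information via a quadratic KL lower bound. This has two gaps. First, there is no single ``direction of large posterior variance'' that works: the posterior over minimisers can concentrate on the boundary of a very eccentric level set, and a one-dimensional projection cannot capture information about $f_t$ uniformly over all convex $f_t$ in the support. Second, nothing in the sketch explains where the factor $d^4$ comes from, which is precisely the quantitative heart of the result. The paper instead partitions the posterior support according to the \emph{level sets} $K_\epsilon=\{\bar f\le\bar f_\star+\epsilon\}$ of the posterior mean function $\bar f$, samples uniformly from $\partial K_\epsilon$ after putting $K_\epsilon$ in minimal surface area position, and controls the information ratio via (i) a ``line of sight'' lemma showing that two convex functions with different minimisers must differ substantially at one of two boundary points of a level set along the ray towards the minimiser, (ii) a concavity theorem for the depth-to-length ratio $\Psi_K$ of rays through the level set, which guarantees a constant fraction of rays are simultaneously long enough to give signal and short enough to keep rays roughly parallel, and (iii) a combination lemma (an analogue of the Thompson-sampling information-ratio decomposition) that pays only a factor $k=O(d\log(nd/m))$ for stitching together the $k$ exploratory distributions across the geometric grid of level sets. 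The factor $d^4$ emerges as $d^3$ from the surface-to-shadow ratio in minimal surface area position times the $\Psi\approx1/d$ scale, and the remaining $d\log(\cdot)$ from the size of the level-set grid. None of these ideas appear in your sketch, and a straightforward variance-direction argument of the kind you describe is not known to give a polynomial-in-$d$ information ratio for general convex losses. So the proposal is a correct outline of the surrounding bookkeeping but leaves the actual proof unaddressed.
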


The assumption that $\cK$ contains a unit-radius Euclidean ball is relatively mild.
Any convex body can be rescaled to contain a unit ball, while the regret depends only logarithmically on the diameter.
As in previous work, we make use of a simple reduction that allows us to restrict slightly the class of functions available to the adversary \citep{BE18,BLE17}.
Define a constant $m = 1/((n+1) \diam(\cK)^2)$ and let $\cF$ be the space of all convex functions $\cK$ to $[0,1]$ that are
\begin{itemize}
\item[(a)] $n$-Lipschitz: $f(x) - f(y) \leq n \norm{x - y}$ for all $x, y \in \cK$; and
\item[(b)] $m$-strongly convex: for all $x, y \in \cK$ and $\lambda \in [0,1]$,
\begin{align*}
\!\!\!\!\!\!
f(\lambda x + (1 - \lambda) y) \leq \lambda f(x) + (1 - \lambda) f(y) - \frac{1}{2} m \lambda(1-\lambda) \norm{x - y}^2\,. 
\end{align*}
\end{itemize}
\cref{prop:reduce} in \cref{sec:reduction} shows that it suffices to prove \cref{thm:regret} with $\cF$ rather than $\cG$.
Briefly, the reduction works by showing that bounded convex functions from $\cK$ to $[0,1]$ cannot have large directional derivatives except close to the boundary $\partial \cK$ and must have near-minimisers that
are not too close to the boundary. This means the learner can play on a subset of $\cK$ on which the loss functions are $n$-Lipschitz without sacrificing much in terms of the regret. 
Strong convexity is introduced by adding a small quadratic to all loss functions, which has only a negligible impact because $m$ is small.

The next (more or less known) theorem serves as our starting point. It follows from the machinery developed by \citeauthor{BDKP15} \cite{BDKP15}, \citeauthor{BE18} \cite{BE18} and \citeauthor{RV14} \cite{RV14}.
For completeness, an outline of the proof is given in \cref{app:ids}.

\begin{theorem}\label{thm:ids}
Let $\alpha, \beta \in \R$ be non-negative and for $f \in \cF$, let $f_\star = \min_{x \in \cK} f(x)$.
Suppose that for any $\bar f \in \cF$ and finitely supported distribution $\mu$ on $\cF$ with the discrete $\sigma$-algebra there exists a probability measure $\rho$ on $\cK$ such that
\begin{align}
\int_\cK \bar f(x) \d{\rho}(x) - \int_\cF f_\star \d{\mu}(f) \leq
\alpha + \sqrt{\beta \int_{\cF} \int_{\cK} (\bar f(x) - f(x))^2 \d{\rho}(x) \d{\mu}(f)}\,.
\label{eq:inf}
\end{align}
Then the minimax regret is bounded by
\begin{align*}
\Reg_n^\star(\cF) \leq 3 + n \alpha + \sqrt{\beta d n \log\left(3n^2 \max(1, (2\beta)^{1/2}) \diam(\cK)\right)}\,.
\end{align*}
\end{theorem}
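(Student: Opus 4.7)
The plan is to combine a minimax-to-Bayesian reduction with an information-directed sampling argument in the style of \citeauthor{RV14}, adapted to bandit convex optimisation along the lines of \citeauthor{BDKP15} and \citeauthor{BE18}. The hypothesis (\ref{eq:inf}) already does the hard analytic work of producing a good exploration distribution; Theorem~\ref{thm:ids} packages this with the standard information-theoretic machinery. I would begin by replacing $\cF$ with a sup-norm $\epsilon$-net $\cF_\epsilon$, for which the classical metric-entropy estimate for uniformly bounded $n$-Lipschitz convex functions gives $\log|\cF_\epsilon|$ polynomial in $d\log(n\diam(\cK)/\epsilon)$; the discretisation error contributes at most $2n\epsilon$ to the regret and will be absorbed by choosing $\epsilon$ polynomially small in $n$. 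A Sion-type minimax theorem, applied now on a finite set, then reduces the problem to upper bounding the Bayesian regret against an arbitrary prior $\nu$ on $\cF_\epsilon^n$.

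Fix such a prior and let $\mathcal H_{t-1}$ denote the history of plays and observations before round $t$. Write $\mu_t$ for the (automatically finitely supported) conditional law of $f_t$ given $\mathcal H_{t-1}$, and $\bar f_t = \int f \d{\mu_t}(f)$ for its mean. The learner plays $x_t \sim \rho_t$ where $\rho_t$ is the measure supplied by (\ref{eq:inf}) with $\bar f = \bar f_t$ and $\mu = \mu_t$. Using $f_t(x) \geq f_{t,\star} := \min_{y\in\cK} f_t(y)$ for any comparator $x \in \cK$ and taking the conditional expectation of (\ref{eq:inf}) gives
\[
\E[f_t(x_t) - f_t(x) \mid \mathcal H_{t-1}] \;\leq\; \alpha + \sqrt{\beta V_t}\,, \qquad V_t := \E\!\left[(\bar f_t(x_t) - f_t(x_t))^2 \mid \mathcal H_{t-1}\right].
\]
Summing over $t$, taking the overall expectation and applying Cauchy--Schwarz yields $\BReg_n(\nu) \leq n\alpha + \sqrt{\beta n \sum_{t=1}^n \E V_t}$.

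It remains to control $\sum_t \E V_t$. Discretise $\cK$ at scale $\delta$ and let $x^\star$ be the (random, discretised) minimiser of $\sum_s f_s$. A standard variance-to-information inequality for $[0,1]$-valued observations shows that $\E V_t$ is dominated by a constant times the conditional mutual information $I(y_t;x^\star \mid \mathcal H_{t-1})$; chain-ruling and bounding mutual information by entropy then gives $\sum_t \E V_t \leq C\,H(x^\star) \leq Cd\log(\diam(\cK)/\delta)$. Choosing $\epsilon$ and $\delta$ polynomially small in $n$ and $(2\beta)^{1/2}$ absorbs the discretisation errors into the logarithm and produces the advertised bound $3 + n\alpha + \sqrt{\beta d n \log(3n^2 \max(1,(2\beta)^{1/2})\diam(\cK))}$.

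The main obstacle is executing the variance-to-information step cleanly, so that the single factor of $d$ appears only through the covering entropy of $\cK$ (and not through $\cF_\epsilon$) and the dependence on $\beta$ inside the logarithm matches exactly; this forces one to measure information relative to $x^\star$ rather than the full function sequence. The Sion-type reduction is largely mechanical once the discretisation is in place, but must be done carefully to avoid measurability issues arising from the infinite-dimensional nature of $\cF$ before discretisation.
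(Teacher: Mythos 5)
Your high-level plan (discretise, dualise, run information-directed sampling against the hypothesised exploratory distribution, bound variance by mutual information with the discretised optimiser) matches the paper's skeleton, but there is a genuine gap in the information step that would cause the argument to fail.

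The problem is the choice of $\mu_t$. You take $\mu_t$ to be the posterior over $f_t$ given the history, so that $\bar f_t = \int f\,\d\mu_t(f) = \E_{t-1}[f_t]$. Applying the hypothesis with this $\mu_t$ produces
\[
V_t = \int_\cK \int_\cF (\bar f_t(x)-f(x))^2\,\d\mu_t(f)\,\d\rho_t(x) = \E_{x\sim\rho_t}\bigl[\operatorname{Var}_{t-1}\!\bigl(f_t(x)\bigr)\bigr]\,,
\]
which is the \emph{total} predictive variance of $f_t(x_t)$. This quantity is not bounded by the conditional mutual information $I\bigl(f_t(x_t);\,x^\star\,|\,\mathcal H_{t-1}\bigr)$: take a product prior in which each $f_t$ is an independent fair mixture of two functions disagreeing at every action; then $I$ about $x^\star$ can vanish while $V_t$ stays bounded away from zero, so $\sum_t \E V_t = \Omega(n)$ and the Pinsker step breaks. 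Pinsker's inequality only controls the \emph{explained} variance $\E[(\E[Z\,|\,W]-\E[Z])^2]\leq \tfrac12 I(Z;W)$, not $\operatorname{Var}(Z)$. The fix is the one the paper makes: take $\mu_t$ to put mass $\E_{t-1}[\sind_{y^\star=y}]$ on the conditional means $f_{t,y}(\cdot)=\E_{t-1}[f_t(\cdot)\,|\,y^\star=y]$ for $y$ ranging over the finite cover $\cC$ of $\cK$. These $f_{t,y}$ remain in $\cF$ (Lipschitzness and strong convexity are preserved under conditional averaging), $\bar f_t$ is still the $\mu_t$-mean, and the resulting quadratic term is precisely $\E_{t-1}[(\bar f_t(x_t)-\E_{t-1}[f_t(x_t)\,|\,y^\star])^2]$, which Pinsker does bound by information about $y^\star$ and which chain-rules to $H(y^\star)\leq \log|\cC|$.

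Two secondary points. First, the metric-entropy claim that $\log|\cF_\epsilon|$ is polynomial in $d\log(n\diam(\cK)/\epsilon)$ is false: the $L^\infty$ covering number of uniformly bounded Lipschitz convex functions on a $d$-dimensional domain has logarithm of order $\epsilon^{-d/2}$ (Bronshtein), which is exponential in $d$. This does not sink your argument because $\log|\cF_\epsilon|$ never enters the final bound, but the paper avoids the issue entirely by leaving $\cF$ undiscretised and applying a minimax theorem that quantifies over finitely supported adversary mixtures, discretising only the action space $\cK$. Second, it is the $\cK$-discretisation at resolution $\epsilon=1/(n^2\max(1,(2\beta)^{1/2}))$, combined with the $n$-Lipschitz property both to control comparator approximation and to pass the exploratory inequality from $\rho_t$ to its projection $\xi_t=\rho_t\circ\Pi^{-1}$ onto $\cC$, that produces the $3n^2\max(1,(2\beta)^{1/2})\diam(\cK)$ inside the logarithm; you should make this step explicit rather than relegating it to an afterthought.
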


The distribution $\rho$ in \cref{thm:ids} is called an exploratory distribution.
\citeauthor{BE18} \cite{BE18} established the conditions of \cref{thm:ids} with
$\alpha = 1/\sqrt{n}$ and $\beta = d^{21} \operatorname{polylog}(n)$. 
The next theorem improves on this result.

\begin{theorem}\label{thm:main}
For any $\bar f \in \cF$ and finitely supported distribution $\mu$ over $\cF$, there exists a probability measure $\rho$ on $\cK$ such that \cref{eq:inf} holds
with $\alpha = 1/n$ and $\beta = \const \cdot d^4 \log(nd/m)$, where $\const$ is a universal constant.
\end{theorem}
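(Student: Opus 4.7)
The plan is to construct the exploratory distribution $\rho$ so that two competing quantities are simultaneously controlled: the exploitation cost $\int_\cK \bar f \d{\rho}$ stays close to $\bar f_\star := \min_\cK \bar f$, while for any function $f$ in the support of $\mu$ whose minimum is significantly smaller than $\bar f_\star$, the detection quantity $\int (\bar f - f)^2 \d\rho$ is at least of the order of the squared regret gap. Compared with \cite{BE18}, the improvement should come from using a purely convex-geometric $\rho$ that avoids Gaussian smoothing and the polynomial-in-$d$ losses it introduces.

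As a first step, I would use the $m$-strong convexity of $\bar f$ to define a nested family of focus regions $F_\eta = \{x \in \cK : \bar f(x) \leq \bar f_\star + \eta\}$, each convex and of diameter at most $\sqrt{2\eta/m}$. The candidate $\rho$ would then be a mixture of a deep-focus component uniform on some $F_{\eta_0}$ with $\eta_0 \sim 1/n$ (so that $\int \bar f \d\rho - \bar f_\star \lesssim \eta_0$) and a broad-exploration component uniform on a larger $F_{\eta_1}$, or on an affine rescaling of $\cK$ that puts the level sets of $\bar f$ into isotropic position. A logarithmic sweep over the scale $\eta_1$ would let the $\log(nd/m)$ factor in $\beta$ absorb the entire range.

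The central lemma is convex-analytic: for every $f \in \cF$, the gap $\gamma := \bar f_\star - f_\star$ should force $\int (\bar f - f)^2 \d\rho \gtrsim \gamma^2 / d^{O(1)}$. I would argue that the sublevel set $S_f = \{f \leq f_\star + \gamma/4\}$ is a convex body containing $\argmin_\cK f$, and that a Gr\"unbaum-type slicing inequality applied to the exploration component after isotropic rescaling lower-bounds $\rho(S_f)$ by $1/\operatorname{poly}(d)$. Since the pointwise gap $\bar f - f$ is at least $\gamma/2$ on $S_f \cap \{\bar f \geq \bar f_\star + \gamma/4\}$, squaring and integrating closes the loop. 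Tracking constants carefully through the isotropic normalisation and the slicing inequality is what should bring the final exponent down to $d^4$.

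The main obstacle is the aggregation step. Inequality~\eqref{eq:inf} mixes an $L^1$ quantity (the regret gap) with an $L^2$ quantity (the detection term), and must hold \emph{for a single $\rho$} against the worst-case $\mu$. A naive per-$f$ estimate followed by integration over $\mu$ loses a factor proportional to the effective diameter of $\operatorname{supp}(\mu)$ in function space; the right move should be a Cauchy--Schwarz that places the $\mu$-integral inside the square root, combined with a choice of $\rho$ that depends on $\bar f$ and $\mu$ only through their aggregated behaviour on $\cK$. Landing precisely at $\alpha = 1/n$ and $\beta = \const \cdot d^4 \log(nd/m)$ will then require a truncation argument for functions whose minimiser is within $1/n$ of $\partial \cK$, with the truncation cost absorbed into $\alpha$.
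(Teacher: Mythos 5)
Your high-level plan shares some scaffolding with the paper: using level sets of $\bar f$ (your $F_\eta$ are the paper's $K_\epsilon$), a geometric sweep over scales, a $1/n$ truncation for functions near the top, and a Cauchy--Schwarz aggregation that moves the $\mu$-integral inside the square root (this is exactly what \cref{lem:combine} does). But the central lemma you propose is wrong, and the gap is not cosmetic.

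You want to lower bound $\int (\bar f - f)^2 \d\rho$ by arguing that $\rho(S_f) \gtrsim 1/\operatorname{poly}(d)$, where $S_f = \{f \le f_\star + \gamma/4\}$, and then using the pointwise bound $\bar f - f \ge 3\gamma/4$ on $S_f$ (which is correct). The problem is that no exploratory measure $\rho$ that depends only on $\bar f$ and the aggregated behaviour of $\mu$ can satisfy $\rho(S_f) \gtrsim 1/\operatorname{poly}(d)$ for all $f$. The set $S_f$ is a tiny convex body centred at $x_\star = \argmin f$: by the $n$-Lipschitz assumption it can have Euclidean radius as small as $\gamma/(4n)$, while $x_\star$ may sit anywhere in a much larger level set of $\bar f$. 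For an isotropically rescaled $\rho$ supported on a body of characteristic radius $R$, a generic $f$ with $\gamma/(4n) \ll R$ gives $\rho(S_f) \lesssim (\gamma/(nR))^d$ --- exponentially small in $d$, not polynomial. Gr\"unbaum's inequality (which controls halfspace cuts through the centroid of a convex body) does not rescue this: $S_f$ is neither a halfspace nor anchored at a centroid of the relevant body, and the bound is volume-fraction, not position. There is no way to centre $\rho$ near every possible $x_\star$, so the scale mismatch kills the argument.

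The paper evades this precisely because it never tries to place $\rho$-mass inside $S_f$. Instead, $\rho$ is the (affinely normalised) surface measure on the boundary $\partial K_\epsilon$ of a level set of $\bar f$, and the detection guarantee comes from \cref{lem:los}: for a chord from $x_\star$ through $K_\epsilon$, convexity of $f$ together with $g(x) = g_\star + \epsilon$ at the near boundary forces $(\bar f - f)^2$ to be of order $\psi^2(\bar f_\star + \epsilon - f_\star)^2$ at one of the two intersection points, where $\psi$ is the depth/length ratio of the chord. The depth ratio $\Psi_{K_\epsilon}(x_\star)$ is then tuned to the window $[1/(128d), 1/(32d)]$ by the logarithmic sweep, and the integral over $\partial K_\epsilon$ is controlled by the minimal-surface-area position of $K_\epsilon$ (\cref{rem:key}, \cref{lem:concave}). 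In other words, the information about $f$ is read off at the \emph{boundary} of the $\bar f$-level set, along rays through $x_\star$, not in a neighbourhood of $x_\star$. This line-of-sight mechanism is the missing idea; without it, or a substitute, your per-$f$ estimate cannot be made polynomial in $d$.
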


Combining \cref{thm:ids,thm:main} with the reduction in \cref{prop:reduce} proves \cref{thm:regret}.

\paragraph{Related work}
Online convex optimisation is usually studied under the assumption that the learner has access to the (sub-)gradient $\nabla f_t(X_t)$, or even the whole function $f_t$.
A number of perspectives on this vast literature can be found in recent books and notes by \citeauthor{Ces06} \cite{Ces06}, \citeauthor{Haz16} \cite{Haz16} and \citeauthor{Ora19}~\cite{Ora19}.
There is far less work in the zeroth-order bandit setting in which the learner only has access to point evaluations.
A natural idea is to use importance-weighting to estimate the gradients. At least with current tools, however, the resulting bias/variance tradeoff leads to suboptimal regret \cite{HPGS16}.

The function class $\cF$ is omitted from the following regret bounds, to emphasise that the assumptions vary in minor ways.
Constant factors are also omitted, while Big-O is used to hide dependence on other parameters such as smoothness/dimension/strong convexity.
Information-theoretic means were used by \citeauthor{BDKP15} \cite{BDKP15}
to show that the minimax regret is $\Reg_n^\star \leq \sqrt{n} \log(n) $ when $d = 1$.
The multi-dimensional problem was considered by 
\citeauthor{HY16} \cite{HY16}, who showed that the minimax regret is $O(\sqrt{n} \operatorname{polylog}(n))$, but with an exponential dependence on the dimension.
Shortly after, \citeauthor{BE18} \cite{BE18} generalised the information-theoretic machinery to prove that $\Reg_n^\star \leq d^{11} \sqrt{n} \log(n)^4 $, breaking the exponential dependence
on the dimension while retaining square root dependence on the horizon.
None of these works provide an efficient algorithm.
More recently, \citeauthor{BLE17} \cite{BLE17} used kernel-based estimators and tools from online convex optimisation to show that $\Reg_n^\star \leq d^{9.5} \sqrt{n} \log(n)^{7.5}$.
Furthermore, their algorithm can be implemented in polynomial time (with reasonable assumptions) with the price that the dimension-dependence increases to $d^{10.5}$.
They conjectured that the optimal regret is $\Reg_n^\star \leq d^{1.5} \sqrt{n} \operatorname{polylog}(n)$. The best known lower bound is $\Reg_n^\star \geq d \sqrt{n}$,
which holds even when the function class is restricted to linear functions \cite{DHK08}. There is also a line of work that exploits strong convexity to obtain better bounds.
In particular, \citeauthor{HL14} \cite{HL14} show that $\Reg_n^\star \leq d  (d + \beta/m)^{1/2} \sqrt{n} \log(n)^{1/2}$ for $m$-strongly convex and $\beta$-smooth functions.
Even setting aside the smoothness assumption, the polynomial dependence on the strong convexity parameter means that enforcing strong convexity 
by adding a small quadratic to the losses blows up the bound and leads to suboptimal rates.

\paragraph{Comparison to \citeauthor{BE18} \cite{BE18}}
Both works rely on minimax duality to relate the Bayesian and frequentist regret, and on the information-theoretic tools developed by \citeauthor{RV14} \cite{RV14} connecting the Bayesian regret
to a tradeoff between regret and mutual information about the optimal action. What is entirely different is how the aforementioned tradeoff is approximately optimised.
The key property exploited by \citeauthor{BE18} is that suitably bounded convex functions must be approximately linear on many reasonably large balls, which is
combined with an elementary `line of sight' argument to construct an exploratory distribution. The main challenge is to establish the approximate linearity, which can be viewed 
as a quantification of Alexandrov's theorem that convex functions are almost everywhere twice differentiable. In this sense, the structure they exploit depends on a deep understanding of the behaviour
of the gradients of convex functions. As explained momentarily, the structure exploited in the present work concerns the level sets of convex functions, which are less sophisticated
and more well-studied. It is important to emphasise that both results rely on versions of \cref{thm:ids}, which is non-constructively established via minimax duality. 
Because of this, neither approach yields an algorithm.

\paragraph{Preliminaries}
The $d$-dimensional sphere is $S^d = \{x \in \R^{d+1} : \norm{x} = 1\}$.
The Minkowski sum of sets $A$ and $B$ is denoted $A + B$. When $x$ is a vector, $A + x = A + \{x\}$.
Let $\vol_p$ denote the $p$-dimensional Hausdorff measure on $\R^d$, normalised to coincide with the Lebesgue measure.
Given $x, y \in \R^d$, $[x,y] = \{t x + (1 - t) y : t \in [0,1]\}$ is the chord connecting $x$ and $y$.
The sets $[x, y)$ and $(x, y]$ and $(x, y)$ are defined similarly but with appropriate end-points removed.
When $x \neq \zero$, the hyperplane with normal $x$ is $x^\perp = \{y \in \R^d : \ip{x, y} = 0\}$ and $P_x(y) = \argmin_{z \in x^\perp} |y - z|$ is the Euclidean projection of $y$ onto $x^\perp$.
The shadow of a convex body $K \subset \R^d$ in direction $x$ is $P_x(K) = \{P_x(y) : y \in K\}$.
The infimum of a convex function $f : K \to \R$ is $f_\star = \inf_{x \in K} f(x)$.
The boundary of $K$ is $\partial K$.

\section{Combining exploratory distributions}\label{sec:combine}
The plan is to establish the conditions of \cref{thm:ids} for suitable values of $\alpha$ and $\beta$, which means that for any $\bar f \in \cF$ and distribution $\mu$ on $\cF$ we need to find a probability measure $\rho$ on $\cK$ satisfying \cref{eq:inf}.
To make the problem more manageable, we first prove that exploratory distributions can be combined.

\begin{lemma}\label{lem:combine}
Let $\bar f \in \cF$ and $\cF = \cup_{i=1}^k \cF_i$. Assume there exist probability measures $(\rho_i)_{i=1}^k$ on $\cK$ such that
for all $i \in \{1,\ldots,k\}$,
\begin{align}
\int_\cK \bar f(x) \d{\rho_i(x)} - f_\star \leq \alpha + \sqrt{\beta \int_\cK (\bar f(x) - f(x))^2 \d{\rho_i}(x)}  \quad \text{ for all } f \in \cF_i\,.
\label{eq:ass}
\end{align}
Then, for any finitely supported distribution $\mu$ on $\cF$, there exists a probability measure $\rho$ on $\cK$ such that
\begin{align*}
\int_\cK \bar f(x) \d{\rho}(x) - \int_\cF f_\star \d{\mu}(f)
&\leq  \alpha + \sqrt{\beta k \int_\cF \int_\cK \left(\bar f(x) - f(x)\right)^2 \d{\rho}(x) \d{\mu}(f)}\,. 
\end{align*}
\end{lemma}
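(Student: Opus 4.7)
The plan is to take $\rho = \sum_{i=1}^k w_i \rho_i$ as a convex combination of the given exploratory distributions, with weights determined by $\mu$. To avoid overcounting when the cover $(\cF_i)$ is not disjoint, first refine it to a partition $\cF = \bigsqcup_i \cF_i^\star$ with $\cF_i^\star \subseteq \cF_i$ (for example $\cF_i^\star = \cF_i \setminus \bigcup_{j<i} \cF_j$). Set $w_i = \mu(\cF_i^\star)$, and for those indices with $w_i > 0$ let $\mu_i$ denote the conditional of $\mu$ on $\cF_i^\star$, so that $\mu = \sum_i w_i \mu_i$; indices with $w_i = 0$ contribute nothing to either side and may be discarded.

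The choice $\rho = \sum_i w_i \rho_i$ makes both $\int_\cK \bar f(x) \d{\rho}(x) = \sum_i w_i \int_\cK \bar f(x) \d{\rho_i}(x)$ and $\int_\cF f_\star \d{\mu}(f) = \sum_i w_i \int_\cF f_\star \d{\mu_i}(f)$, which reduces the global claim to a sum of per-component estimates. For each $i$, the hypothesis \eqref{eq:ass} applies to every $f \in \cF_i^\star \subseteq \cF_i$; integrating it against $\mu_i$ and applying Jensen's inequality to the concave square root yields
\begin{align*}
\int_\cK \bar f(x) \d{\rho_i}(x) - \int_\cF f_\star \d{\mu_i}(f) \leq \alpha + \sqrt{\beta T_i}\,, \qquad T_i \coloneqq \int_\cF \int_\cK (\bar f(x) - f(x))^2 \d{\rho_i}(x) \d{\mu_i}(f)\,.
\end{align*}
Multiplying by $w_i$, summing over $i$ and using $\sum_i w_i = 1$ then gives the preliminary bound $\int_\cK \bar f(x) \d{\rho}(x) - \int_\cF f_\star \d{\mu}(f) \leq \alpha + \sum_i w_i \sqrt{\beta T_i}$.

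To close the argument, I would absorb $w_i$ into the root and apply Cauchy--Schwarz in the form $\sum_i \sqrt{a_i} \leq \sqrt{k \sum_i a_i}$ with $a_i = \beta w_i^2 T_i$, obtaining $\sum_i w_i \sqrt{\beta T_i} \leq \sqrt{\beta k \sum_i w_i^2 T_i}$. It then suffices to verify the diagonal-dominance inequality $\sum_i w_i^2 T_i \leq \int_\cF \int_\cK (\bar f(x) - f(x))^2 \d{\rho}(x) \d{\mu}(f)$, and this is immediate after expanding $\rho = \sum_j w_j \rho_j$ and $\mu = \sum_j w_j \mu_j$ on the right-hand side: the resulting non-negative double sum $\sum_{i,j} w_i w_j \int_\cF \int_\cK (\bar f(x) - f(x))^2 \d{\rho_j}(x) \d{\mu_i}(f)$ already contains the diagonal $i=j$ contribution $\sum_i w_i^2 T_i$. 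The proof is essentially bookkeeping; the one point to check carefully is that the specific weighting $w_i = \mu(\cF_i^\star)$ is what simultaneously makes the per-component estimates sum cleanly on the left and matches the diagonal of the quadratic double integral on the right, so that the factor of $k$ falls out exactly from Cauchy--Schwarz.
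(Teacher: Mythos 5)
Your proposal is correct and follows essentially the same route as the paper: define $\rho$ as the $\mu$-weighted mixture of the $\rho_i$, split the left-hand side by conditioning on the partition, apply the hypothesis and Jensen's inequality within each block, then close with Cauchy--Schwarz in the form $\sum_i a_i \le \sqrt{k\sum_i a_i^2}$ and the observation that the diagonal $\sum_i w_i^2 T_i$ is dominated by the full double sum defining $\int\int(\bar f - f)^2\,\d{\rho}\,\d{\mu}$. The only cosmetic difference is that you build the disjoint refinement $\cF_i^\star = \cF_i \setminus \bigcup_{j<i}\cF_j$ explicitly, whereas the paper simply declares disjointness without loss of generality.
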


\begin{proof}
The argument is algebraically identical to that used by \citeauthor{RV16} \cite{RV16} to bound the information ratio for Thompson sampling.
Assume without loss of generality that $(\cF)_{i=1}^k$ are disjoint and $\mu(\cF_i) > 0$ for all $i \in \{1,\ldots,k\}$. 
Let $\mu_i$ be the probability measure obtained by conditioning $\mu$ on $\cF_i$: $\mu_i(A) = \mu(A \cap \cF_i) / \mu(\cF_i)$.
Then, letting $q_i = \mu(\cF_i)$ and $\rho = \sum_{i=1}^k q_i \rho_i$,
\begin{align*}
\int_\cK \bar f(x) \d{\rho}(x) &- \int_\cF f_\star \d{\mu}(f)
= \sum_{i=1}^k q_i \left(\int_\cK \bar f(x) \d{\rho_i}(x) - \int_\cF f_\star \d{\mu_i}(f)\right) \\
&\leq \alpha + \sum_{i=1}^k q_i \sqrt{\beta \int_\cF \int_\cK \left(\bar f(x) - f(x)\right)^2 \d{\rho_i}(x) \d{\mu_i}(f)} \\
&\leq \alpha + \sqrt{\beta k \sum_{i=1}^k q_i^2 \int_\cF \int_\cK \left(\bar f(x) - f(x)\right)^2 \d{\rho_i}(x) \d{\mu_i}(f)} \\
&\leq \alpha + \sqrt{\beta k \sum_{i, j=1}^k q_i q_j \int_\cF \int_\cK \left(\bar f(x) - f(x)\right)^2 \d{\rho_i}(x) \d{\mu_j}(f)} \\
&= \alpha + \sqrt{\beta k \int_\cF \int_\cK \left(\bar f(x) - f(x)\right)^2 \d{\rho}(x) \d{\mu}(f)}\,, 
\end{align*}
where the first inequality follows from the assumption in \cref{eq:ass} and Jensen's inequality and the second follows from Cauchy-Schwarz.
\end{proof}

\section{Proof of \cref{thm:main}}\label{sec:main}
Before the detailed calculations, we spend a moment to explain the main argument in dimension one.

By \cref{lem:combine}, it suffices to find a partition of $\cF = \cup_{i=1}^k \cF_i$ and corresponding exploratory distributions $(\rho_i)_{i=1}^k$ satisfying \cref{eq:ass}
with $k$ not too large. 
Assume without loss of generality that $\bar f$ is minimised at $0 \in \cK$ and define level set $K_\delta = \{x \in \cK : \bar f(x) \leq \bar f_\star + \delta\}$.
Next, let $\epsilon$ be such that $K_\epsilon \subset [-1/(2n^2), 1/(2n^2)]$, which by $m$-strong convexity is possible 
with $\epsilon = 1/\operatorname{poly}(n)$. For logarithmically large $k$ let 
$\cE = \{0, \epsilon, 2\epsilon, \ldots, 2^{k-1} \epsilon\}$ and $\rho_\epsilon$
be the uniform distribution on $\partial K_\epsilon$, which is either a Dirac distribution ($\epsilon = 0$) or a uniform mixture of two Dirac distributions ($\epsilon > 0$).
The situation with $k = 5$ is illustrated in \cref{fig:approach}.
Let $f \in \cF$ be minimised at $x_\star \in \cK$ and consider three cases.
First, when $f_\star \geq \bar f_\star - 1/n$, then \cref{eq:ass} holds trivially with $\alpha = 1/n$ and $\beta = 0$ for $f$ and $\rho_0$.
On the other hand, if $f_\star < \bar f_\star - 1/n$ and $x_\star \in K_\epsilon$, then the fact that $f$ is $n$-Lipschitz shows that
$\bar f(0) - f(0) \geq \bar f(0) - f_\star - n \norm{x_\star} \geq \bar f(0) - f_\star - 1/(2n) \geq \frac{1}{2}(\bar f(0) - f_\star)$
and \cref{eq:ass} holds for $f$ and $\rho_0$ with $\alpha = 0$ and $\beta = 4$.
Otherwise $f_\star \leq \bar f_\star$ and there exists a largest $\delta \in \cE$ such that $x_\star \notin K_\delta$. \cref{fig:approach} highlights the
relevant level set in red and shows that \cref{eq:ass} holds for $f$ and $\rho_\delta$ with $\alpha = 0$ and $\beta$ a universal constant. 
This last fact made formal in \cref{lem:los}.
Hence, $(\rho_\delta)_{\delta \in \cE}$ satisfies the conditions of \cref{lem:combine} with $\alpha = 1/n$ and $\beta$ a universal constant. 

\begin{center}
\includegraphics[width=9cm]{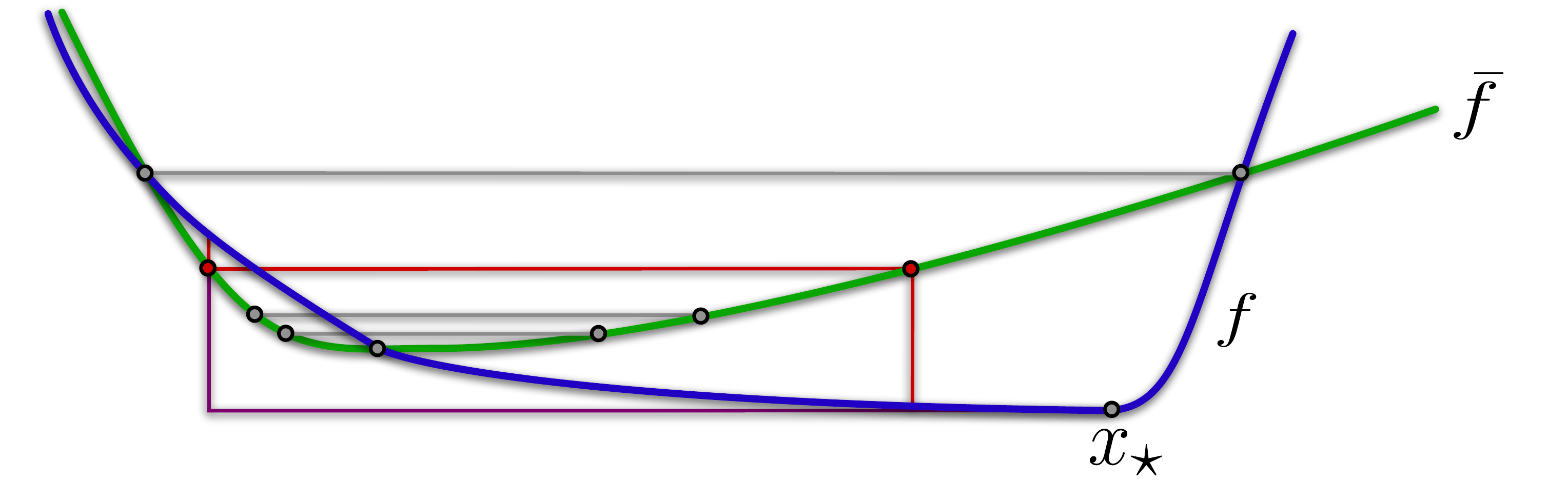}
\captionof{figure}{
}\label{fig:approach}
\end{center}

Superficially, not much changes in higher dimensions. Exploratory distributions are still constructed as probability measures on the boundaries of level sets of $\bar f$ and combined
using \cref{lem:combine}.
Unfortunately, however, the uniform probability measure turns out to be a poor choice for level sets that are not suitably regular, which necessitates a more complicated choice.
Besides this, the geometric grid on level sets needs to be finer and the precise location of $x_\star$ relative to the level sets
becomes more important.

The first lemma shows that convex functions with different minimisers must differ along suitably chosen rays.
The situation is illustrated in \cref{fig:los2}.

\begin{lemma}\label{lem:los}
Let $\cD \subset \R^d$ be a convex body and $f, g : \cD \to \R$ be convex functions with $f$ minimised at $x_\star \in \cD$ and $f_\star \leq g_\star$. Suppose that $\epsilon > 0$ and $x_\star \notin K = \{x \in \cD : g(x) \leq g_{\star} + \epsilon\}$.
Assume that $x, y \in \partial K$ and $x = \psi x_\star + (1 - \psi) y$ with $\psi \in (0, 1)$. Then,
\begin{align*}
(f(x) - g(x))^2 + (f(y) - g(y))^2 \geq \frac{1}{2} \psi^2 \left(g_\star + \epsilon - f_\star)\right)^2\,.
\end{align*}
\end{lemma}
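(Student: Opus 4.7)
The plan is to reduce the problem to a single linear inequality in the two quantities $a = f(x) - g(x)$ and $b = f(y) - g(y)$, and then apply Cauchy--Schwarz to convert this linear constraint into the desired quadratic lower bound on $a^2 + b^2$. The only genuine input from convex analysis is the convexity of $f$ applied along the chord from $x_\star$ through $x$ to $y$, together with the fact that $g$ takes the value $g_\star + \epsilon$ on the relevant part of $\partial K$.

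First I would pin down the values of $g$ on $\partial K$. Since $g$ is a convex function on $\cD$, it is continuous on the relative interior of $\cD$, so any $x \in \partial K$ in the interior of $\cD$ satisfies $g(x) = g_\star + \epsilon$. In the statement of the lemma, this is the situation of interest; the edge case where $x$ or $y$ belongs to $\partial \cD$ (so $g$ could be strictly smaller) can be handled by a short perturbation or by observing that it only strengthens the linear inequality derived below. Accordingly I would state the argument under $g(x) = g(y) = g_\star + \epsilon$.

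Next, apply convexity of $f$ to $x = \psi x_\star + (1-\psi) y$, using $f(x_\star) = f_\star$, to obtain
\begin{align*}
f(x) \leq \psi f_\star + (1-\psi) f(y)\,.
\end{align*}
Substituting $f(x) = a + g_\star + \epsilon$ and $f(y) = b + g_\star + \epsilon$ and simplifying gives the single linear inequality
\begin{align*}
(1-\psi) b - a \geq \psi(g_\star + \epsilon - f_\star)\,,
\end{align*}
whose right-hand side is non-negative since $f_\star \leq g_\star$.

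Finally, Cauchy--Schwarz applied to the vector $(-1, 1-\psi)$ gives $((1-\psi) b - a)^2 \leq (1 + (1-\psi)^2)(a^2 + b^2) \leq 2(a^2 + b^2)$, and chaining this with the previous display yields
\begin{align*}
a^2 + b^2 \geq \tfrac{1}{2} \psi^2 (g_\star + \epsilon - f_\star)^2\,,
\end{align*}
which is exactly the claim. There is no serious obstacle here; the only mild subtlety is justifying the value of $g$ on $\partial K$ outlined in the first step, and this is a standard property of continuous convex functions on the interior of a convex body.
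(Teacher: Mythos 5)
Your proof is correct and is a tidier route than the paper's. The paper splits into two cases according to the sign of $g_\star + \epsilon - \psi f_\star - (1-\psi) f(y)$: in one case it bounds $(f(y)-g(y))^2$ alone, and in the other it bounds the sum of both squares via an explicit quadratic minimisation over $a$ (which is, in disguise, exactly the Cauchy--Schwarz computation you perform). Your version avoids the case split by never isolating $f(x)$: you record the single linear constraint $(1-\psi)b - a \geq \psi(g_\star + \epsilon - f_\star) \geq 0$ coming from convexity of $f$, then immediately apply Cauchy--Schwarz to the vector $(-1, 1-\psi)$, using $1 + (1-\psi)^2 \leq 2$. This is a genuine simplification of the paper's argument, yielding the same constant.

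One small imprecision: you assert that if $x$ or $y$ lies on $\partial\cD$ (so $g$ could be strictly below $g_\star+\epsilon$ there), this ``only strengthens the linear inequality.'' That is true for $y$ --- setting $g(y) = g_\star+\epsilon-\delta_y$ with $\delta_y \geq 0$ adds $(1-\psi)\delta_y \geq 0$ to the right-hand side of the constraint --- but it is false for $x$: writing $g(x) = g_\star+\epsilon-\delta_x$ subtracts $\delta_x$ from the right-hand side and weakens the constraint. Fortunately, the paper's (likewise somewhat terse) justification that $g(x) = g_\star+\epsilon$ holds is the right fact to invoke: since $x$ is an interior point of the chord from $x_\star \notin K$ to $y \in K$ and lies on $\partial K$, continuity of $g$ at $x$ forces $g(x) = g_\star + \epsilon$ whenever $x$ is in the interior of $\cD$, which is the situation in the application. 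So you should drop the claim that the $x$-edge-case helps, and instead argue $g(x) = g_\star+\epsilon$ directly; the relaxation $g(y) \leq g_\star + \epsilon$ is the only one that actually needs handling, and your observation does cover it.
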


\begin{proof}
The definition of the level set and continuity of $g$ means that $g(x) = g_\star + \epsilon$. Furthermore, $g(y) \leq g_\star + \epsilon$, with a strict inequality
only possible if $y$ is on the boundary of $\cD$.
Suppose that $\psi f_\star + (1 - \psi) f(y) \geq g_\star + \epsilon$.  
Then 
\begin{align*}
(f(y) - g(y))^2 \geq \left(\frac{\psi}{1 - \psi}\right)^2 \left(g_\star + \epsilon - f_\star\right)^2 \geq \frac{1}{2} \psi^2 (g_\star + \epsilon - f_\star)^2\,.
\end{align*}
Otherwise, if $\psi f_\star + (1 - \psi) f(y) < g_\star + \epsilon$, then,
\begin{align*}
(g(y) - f(y))^2 + (g(x) - f(x))^2
&= (g(y) - f(y))^2 + (g_\star + \epsilon - f(x))^2 \\
&\geq (g(y) - f(y))^2 + (g_\star + \epsilon - \psi f_\star - (1 - \psi) f(y))^2 \\
&\geq \min_{a \in \R}\left[ (g(y) - a)^2 + (g_\star + \epsilon - \psi f_\star - (1 - \psi) a)^2\right] \\
&= \frac{\left(\psi(g_\star + \epsilon - f_\star) + (1 - \psi) (g_\star + \epsilon - g(y))\right)^2}{2 - 2\psi + \psi^2} \\
&\geq \frac{1}{2} \psi^2 (g_\star + \epsilon - f_\star)^2\,,
\end{align*}
where the first inequality follows from convexity of $f$ implying that 
$f(x) \leq \psi f_\star + (1 - \psi) f(y) < g_\star + \epsilon$.
The last inequality follows from naive bounding.
\end{proof}

\begin{center}
\includegraphics[width=6.75cm]{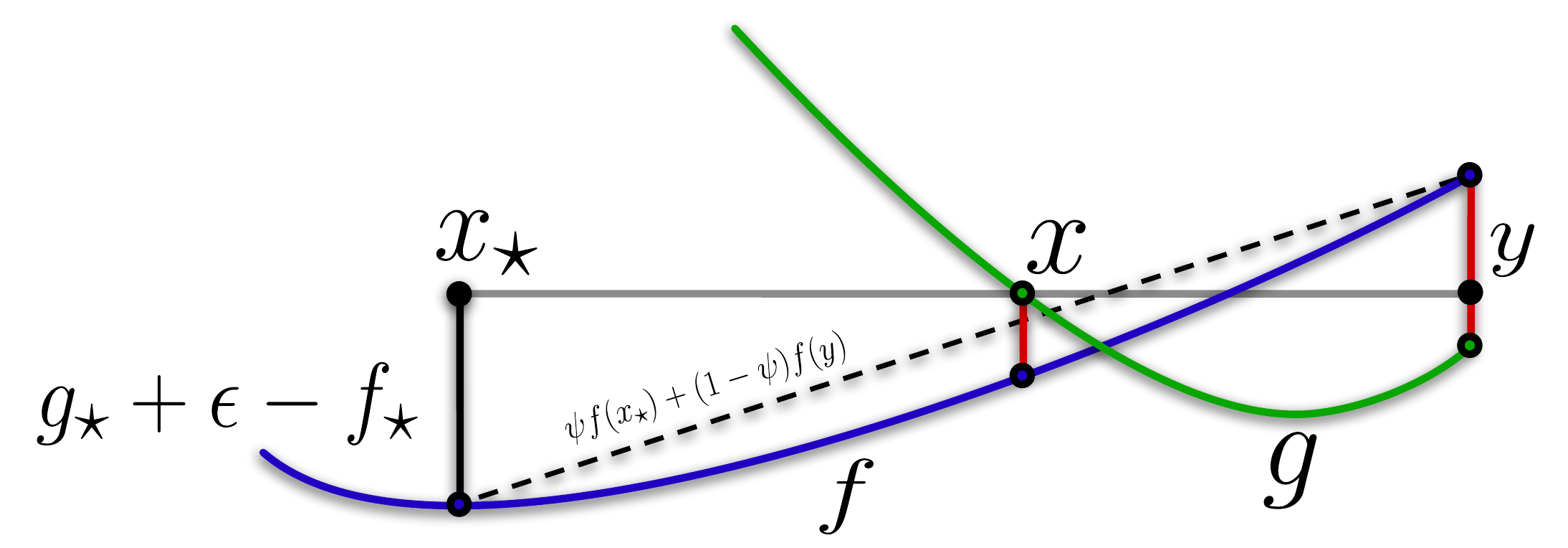}
\hspace{0.75cm}
\includegraphics[width=4cm]{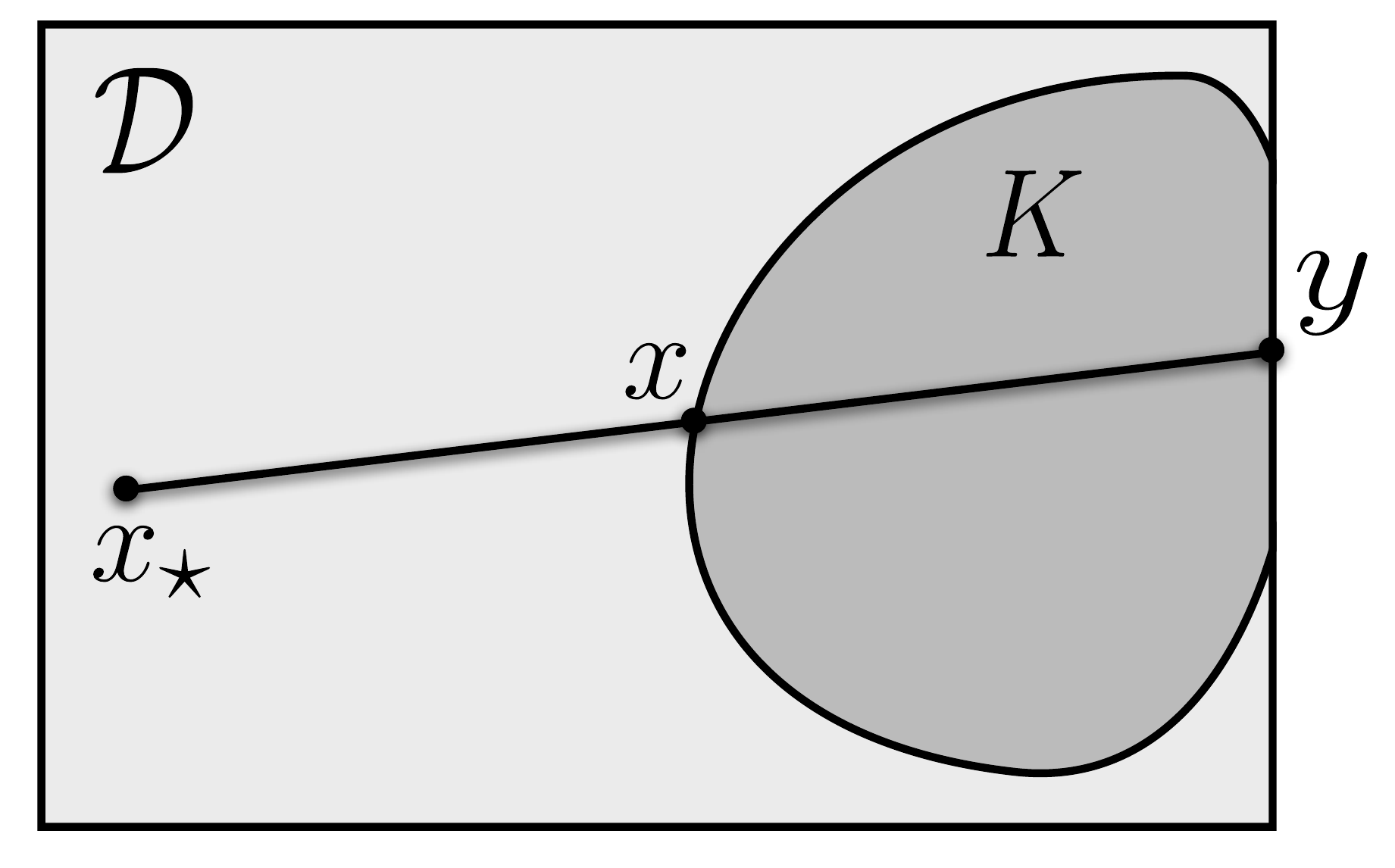}
\captionof{figure}{In the figure on the left, the horizontal grey line marks $g_\star + \epsilon = g(x)$. 
The proof of \cref{lem:los} shows that at least one of the red vertical lines is roughly $\psi$ times the length of the black line. 
The figure on the right shows the top down view.
}
\label{fig:los2}
\end{center}

The next lemma shows that sampling uniformly from the boundary of a level set is a good exploratory distribution for functions $f \in \cF$ minimised at $x_\star$ provided
two conditions hold. First,
the level set should have large shadows relative to its surface area. Second, the `average' depth/length ratio of rays emanating from $x_\star$ and passing 
through the level set should be approximately $1/d$, which is sufficiently large that \cref{lem:los} has teeth and sufficiently small that rays emanating from $x_\star$ and passing through the level
set are approximately parallel. This last property is essential when relating rays, shadows and surface area, and is exploited in the second step of the proof of \cref{lem:key}.
\cref{lem:combine} will then be used to combine exploratory distributions produced from different level sets.
Level sets with small shadows are handled by transforming coordinates so that the level set is in minimal surface area position (definition in \cref{rem:key}), and then pulling
back the uniform distribution on the transformed level set.

Before the statement, we define a measure of the average depth/length ratio for rays emanating from a point and passing through a convex body.
The concepts are illustrated in \cref{fig:shadow}.
Let $K \subset \R^d$ be a convex body with $\zero \in K$ and $x \notin K$.
Let $\pi_K(x, \cdot) : P_x(K) \to \partial K$ be an `inverse' of the projection $P_x$ defined by
\begin{align*}
\pi_K(x, z) = \argmin_{y \in K \cap \{z + tx : t \in \R\}} \ip{y, x} \,.
\end{align*}
For $x \in \R^d$ and $z \in P_x(K)$, define the depth/distance ratio by
\begin{align*}
\Psi_K(x, z) = \frac{\vol_1(K \cap [x, \pi_K(x, z)])}{\vol_1([x, \pi_K(x, z)])} \,.
\end{align*}
The average depth/distance ratio with respect to the uniform probability measure on $P_x(K)$ is
\begin{align}
\Psi_K(x) = \frac{1}{\vol_{d-1}(P_x(K))} \int_{P_x(K)} \Psi_K(x, z) \d{\vol_{d-1}}(z)\,.
\label{def:psi}
\end{align}
The maximum depth/distance ratio is $\Psi_K^\infty(x) = \max_{z \in P_x(K)} \Psi_K(x, z)$.
A number of properties of $\Psi$ are collected in \cref{sec:technical}. The most important are that 
$z \mapsto \Psi_K(x, z)$ is concave on its domain $P_x(K)$ and $\Psi_{TK}(Tx) = \Psi_{K}(x)$ for any linear bijection $T : \R^d \to \R^d$.

\begin{center}
\includegraphics[width=7cm]{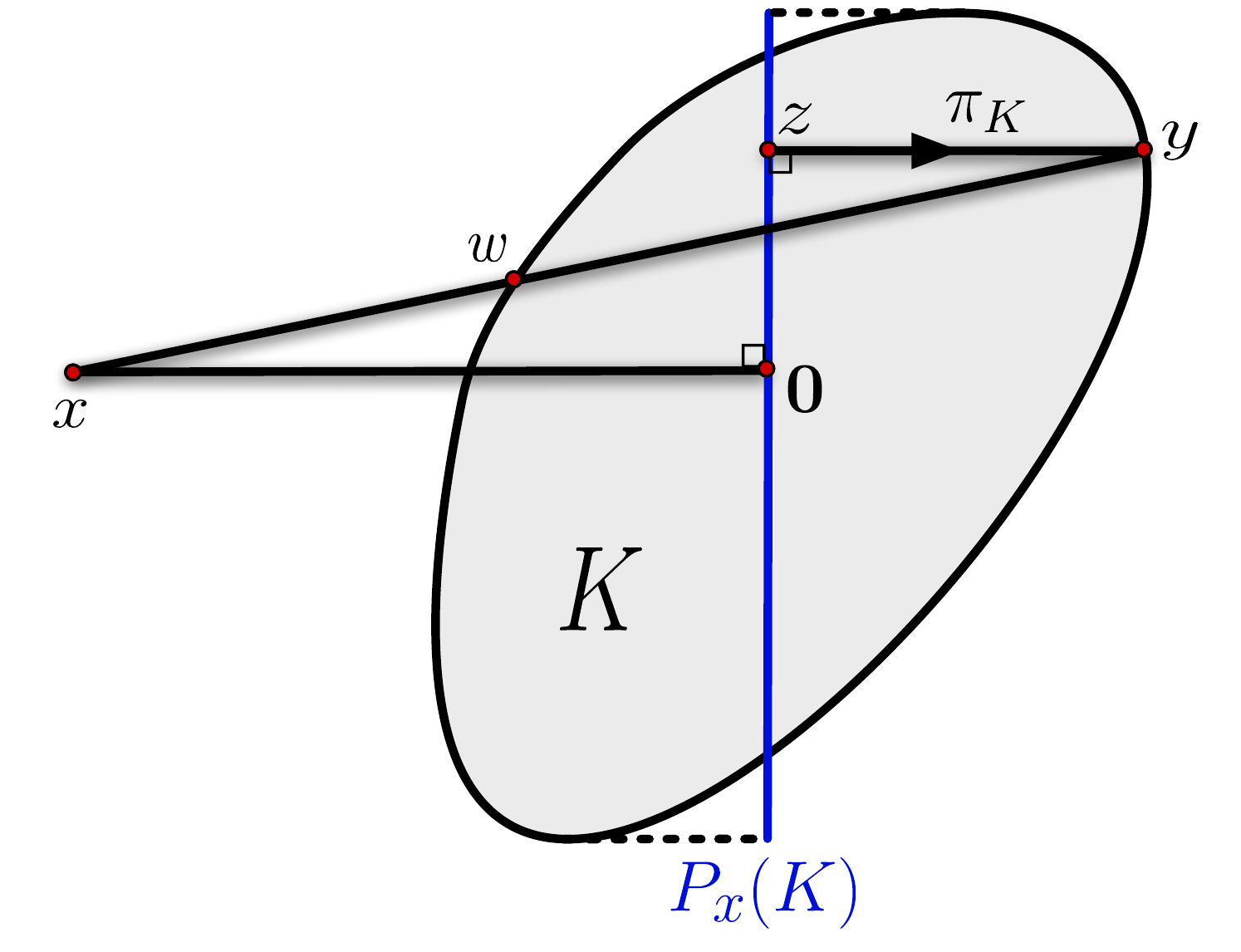}
\captionof{figure}{
For the configuration in the figure, $\pi_K(x, z) = y$ and $\Psi_K(x, z) = \norm{y - w} / \norm{y - x}$, which is the ratio of the depth of $K$ along the chord $[x, y]$ and the length 
of the chord. The quantity $\Psi_K(x)$ averages $\Psi_K(x, z)$ with respect to the uniform probability measure over all $z$ in the shadow $P_x(K)$.}
\label{fig:shadow}
\end{center}

\begin{lemma}\label{lem:key}
Let $\cD \subset \R^d$ be a convex body and
$f, g : \cD \to \R$ be convex with $f_\star \leq g_\star$, $\epsilon > 0$ and $K = \{x \in \cD : g(x) \leq g_\star + \epsilon\}$.
Let $x_\star \in \cD$ be the minimiser of $f$ and
assume that $\zero \in K$ and $\Psi_K(x_\star) \in [1/(128d), 1/(32d)]$.
Then, 
\begin{align*}
\int_{\partial K} g \d{\rho} - f_\star 
&\leq  2^{13} d\sqrt{ \max_{\theta \in S^{d-1}} \left(\frac{\vol_{d-1}(\partial K)}{\vol_{d-1}(P_{\theta}(K))}\right) \int_{\partial K} (f - g)^2 \d{\rho}}\,. \qedhere
\end{align*}
where $\rho = \vol_{d-1} / \vol_{d-1}(\partial K)$ is the surface area probability measure on $\partial K$. 
\end{lemma}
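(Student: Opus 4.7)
Since $g \leq g_\star + \epsilon$ on $K$, the left-hand side is bounded by $g_\star + \epsilon - f_\star$, so it suffices to show
$(g_\star + \epsilon - f_\star)^2 \leq \const \cdot d^2 M \int_{\partial K} (f-g)^2 \d \rho$,
where $M = \max_{\theta \in S^{d-1}} \vol_{d-1}(\partial K) / \vol_{d-1}(P_\theta(K))$. The plan is to apply \cref{lem:los} along every chord of $K$ through $x_\star$, parametrised by the shadow $P_{x_\star}(K)$, and then convert the resulting shadow integrals into surface integrals on $\partial K$.

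For each $z \in P_{x_\star}(K)$, let $y_0(z) = \pi_K(x_\star, z) \in \partial K$ be the far endpoint of the parallel chord through $z$ and let $y'_0(z) \in \partial K$ be the point where the chord $[x_\star, y_0(z)]$ first meets $\partial K$. Then $y'_0(z) = \Psi_K(x_\star, z) \, x_\star + (1 - \Psi_K(x_\star, z)) \, y_0(z)$ by the definition of $\Psi_K(x_\star, z)$, so \cref{lem:los} applied with $\psi = \Psi_K(x_\star, z)$ gives
\begin{align*}
(f-g)^2(y'_0(z)) + (f-g)^2(y_0(z)) \geq \tfrac{1}{2} \Psi_K(x_\star, z)^2 (g_\star + \epsilon - f_\star)^2 \,.
\end{align*}
Integrating over $z$ against $\vol_{d-1}$ and applying Jensen together with the lower hypothesis $\Psi_K(x_\star) \geq 1/(128 d)$ yields
\begin{align*}
\int_{P_{x_\star}(K)} \Psi_K(x_\star, z)^2 \d{\vol_{d-1}}(z) \geq \vol_{d-1}(P_{x_\star}(K)) \, \Psi_K(x_\star)^2 \geq \frac{\vol_{d-1}(P_{x_\star}(K))}{(128 d)^2}\,.
\end{align*}

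It remains to upper bound $\int_{P_{x_\star}(K)} \bigl[(f-g)^2(y'_0(z)) + (f-g)^2(y_0(z))\bigr] \d{\vol_{d-1}}(z)$ by a universal constant multiple of $\int_{\partial K} (f-g)^2 \d{\vol_{d-1}}$. The ``exit'' part is immediate: $z \mapsto y_0(z)$ is the inverse of parallel projection onto $x_\star^\perp$ with Jacobian $|\ip{n(y_0), \hat{x}_\star}| \leq 1$, so this piece is at most $\int_{\partial K}(f-g)^2 \d{\vol_{d-1}}$. The ``entry'' part is the main obstacle: the composite map $z \mapsto y_0(z) \mapsto y'_0(z)$ contains the radial contraction towards $x_\star$, whose Jacobian on $\partial K$ is $(1 - \Psi_K(x_\star, z))^{-(d-1)}$ up to bounded angular corrections, and this is potentially exponentially large in $d$ at a single chord.

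To control this Jacobian I plan to combine two ingredients. Concavity of $z \mapsto \Psi_K(x_\star, z)$ together with its vanishing on $\partial P_{x_\star}(K)$ gives the deterministic estimate $\Psi_K^\infty(x_\star) \leq d \, \Psi_K(x_\star) \leq 1/32$, preventing the exponent from blowing up on any single chord. The upper hypothesis $\Psi_K(x_\star) \leq 1/(32 d)$ then forces the rays from $x_\star$ piercing $K$ to be near-parallel in an integrated sense, so that the expected value of the contraction Jacobian over the shadow is $O(1)$; this is the ``rays/shadows/surface area'' step flagged in the section introduction. Combining this upper bound with the Jensen lower bound, dividing, taking a square root, and replacing $\vol_{d-1}(\partial K)/\vol_{d-1}(P_{x_\star}(K))$ by the larger quantity $M$ (legitimate because $\hat{x}_\star \in S^{d-1}$) delivers the stated inequality, with $2^{13}$ absorbing the universal constants.
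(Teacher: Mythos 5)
Your outline matches the paper's at a high level (apply \cref{lem:los} along chords through $x_\star$, integrate over the shadow, convert shadow integrals to surface integrals), but there is a genuine gap exactly where you flag ``the main obstacle'', and the tools you propose to close it do not suffice.

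The deterministic estimate $\Psi_K^\infty(x_\star)\le d\,\Psi_K(x_\star)\le 1/32$ is correct (it is the second part of \cref{lem:concave-aux}), but it is far too weak to tame the entry-map Jacobian. With only $\Psi_K(x_\star,z)\le 1/32$, the factor $(1-\Psi_K(x_\star,z))^{-(d-1)}$ can be as large as $(32/31)^{d-1}=e^{\Theta(d)}$, and the change-of-variables bound you need is pointwise, not ``in expectation'': after pulling the surface integral back to the shadow, the density picks up $1/|\det \operatorname{D}\Lambda|$, and $(f-g)^2$ could sit exactly where that density is largest. An $O(1)$ average of the Jacobian, even if it held, would therefore not give the required inequality $\int_{P_{x_\star}(K)}(f-g)^2(y_0'(z))\,\d{\vol_{d-1}}(z)\le C\int_{\partial K}(f-g)^2\,\d{\vol_{d-1}}$.

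The paper's fix is to give up on integrating over the whole shadow. Step~1 of the paper's proof uses \cref{lem:concave} (a Paley--Zygmund-style consequence of concavity of $z\mapsto\Psi_K(x_\star,z)$) to extract a subset $B\subset P_{x_\star}(K)$ of measure at least $\tfrac{1}{32}\vol_{d-1}(P_{x_\star}(K))$ on which $\Psi_K(x_\star,z)\in[2^{-9}/d,\,1/(2d)]$ \emph{pointwise}. The upper bound $1/(2d)$ on $B$, rather than the global bound $1/32$, is what makes $\lambda(z)^{d-1}=(1-\Psi_K(x_\star,z))^{d-1}\ge(1-1/(2d))^{d-1}$ a universal constant; together with convexity of $\lambda$ and the matrix determinant lemma this gives $\det(\operatorname{D}\Lambda)\ge 1/4$ on $B$, and the surface-area-to-rays inequality follows. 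The lower bound $2^{-9}/d$ on $B$ then replaces your Jensen step: one uses the pointwise estimate $\Psi_K(x_\star,z)\ge 2^{-9}/d$ on $B$ directly in \cref{lem:los}, rather than lower-bounding $\int\Psi^2$ over the whole shadow. So Jensen is not merely a cosmetic difference from the paper: by averaging over the entire shadow, you lose the localisation that makes the Jacobian controllable, and no amount of reweighting recovers it without something equivalent to the restriction to $B$.

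If you incorporate the restriction to $B$ (with its two-sided pointwise control on $\Psi$) in place of the Jensen step, your argument becomes essentially the paper's proof and goes through; as written, the entry-map Jacobian step is unproved and, I believe, false at the level of generality you assert it.
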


\begin{proof}[Proof of \cref{lem:key}]
To reduce clutter, let $\Psip(z) = \Psi_K(x_\star, z)$ and abbreviate $\pi(z) = \pi_K(x_\star, z)$ and $P = P_{x_\star}$.
The rest of the proof is divided into three steps. The first uses \cref{lem:concave} to show that a large
fraction of rays through $K$ from $x_\star$ are reasonably deep.
The second step connects surface integrals over $\partial K$ to rays cutting $K$ from $x_\star$ and the third puts together
the pieces using \cref{lem:los}.

\paragraph{Step 1: Preponderance of deep cuts}
Let $B \subset P(K)$ be the set given by
\begin{align*}
B = \left\{z \in P(K) : \frac{\Psip(z)}{\frac{1}{\vol_{d-1}(P(K))} \int_{P(K)} \Psip \d{\vol_{d-1}}} \in \left[1/4, 16\right]\right\}\,,
\end{align*}
which is the set of points in $z \in P(K)$ where $\Psip(z)$ is close to its mean $\Psi_K(x_\star)$.
By \cref{lem:concave} and concavity of $\Psip$ (\cref{lem:psi-concave}), 
\begin{align}
\frac{\vol_{d-1}(B)}{\vol_{d-1}(P(K))} \geq \frac{1}{32}\,.
\label{eq:A}
\end{align}
Furthermore, by the assumptions in the lemma,
\begin{align*}
\Psi_K(x_\star) = \frac{1}{\vol_{d-1}(P(K))} \int_{P(K)} \Psip \d{\vol_{d-1}} \in \left[\frac{1}{128d}, \frac{1}{32d}\right]\,,
\end{align*}
which implies that for $z \in B$,
\begin{align}
\Psip(z) \in \left[\frac{2^{-9}}{d}, \frac{1}{2d}\right]\,.
\label{eq:Psi}
\end{align}

\paragraph{Step 2: Surface area to rays}
Let $C = \pi(B) \subset \partial K$. Define a function $\kappa : C \to \partial K$ so that $[x_\star, y] \cap K = [\kappa(y), y]$, which is the point $w$ in \cref{fig:shadow} and
is chosen so that for any $z \in P(K)$,
\begin{align}
\kappa(\pi(z)) = \Psip(z) x_\star + (1 - \Psip(z)) \pi(z) \,.
\label{eq:kappa}
\end{align}
Let $D = \kappa(C)$.
The goal in this step is to establish \cref{eq:proj} below, which makes the connection between rays and surface area.
Let $\varphi : \cD \to [0, \infty)$ be measurable. Then the following holds:
\begin{align}
\int_{C} \varphi \d{\vol_{d-1}} &\geq \int_{B} \varphi \circ \pi \d{\vol_{d-1}}  \nonumber \\ 
\int_{D} \varphi \d{\vol_{d-1}} &\geq \frac{1}{4} \int_{B} \varphi \circ \kappa \circ \pi \d{\vol_{d-1}} \,.
\label{eq:proj}
\end{align}
The first inequality is true because the projection of $\partial K$ onto $P(K)$ only decreases surface area.
The second inequality is immediate when $d = 1$, since $B, C$ and $D$ are singletons. Assume for the remainder of this step that $d > 1$.
Define $\lambda : P(K) \to \R$ and $\Lambda : P(K) \to P(K)$ by $\lambda(z) = 1 - \Psip(z)$ and $\Lambda(z) = \lambda(z) z$. 
Note, $\lambda(z) \in (0,1)$ ensures that $\Lambda(z) \in P(K)$ for all $z \in P(K)$. Also, concavity of $\Psip$ implies convexity of $\lambda$.
For the remainder of this step, choose coordinates on $P(K) \subset x_\star^\perp \cong \R^{d-1}$ via the obvious isometry to $\R^{d-1}$, which means that gradients and similar are written as $(d-1)$-dimensional vectors. 
For the second inequality in \cref{eq:proj}, differentiating $\Lambda$ at $z \in B$ yields $\operatorname{D} \Lambda(z) = \lambda(z) \operatorname{Id} + z \nabla \lambda(z)^\top$, which exists $\vol_{d-1}$-a.e.\ by convexity of $\lambda$.
Recalling the matrix determinant lemma: $\det(A + uv^\top) = \det(A) (1 + v^\top A^{-1}u)$, we have 
\begin{align}
\det(\operatorname{D} \Lambda(z)) 
= \lambda(z)^{d-1} \left(1 + \frac{\ip{\nabla \lambda(z), z}}{\lambda(z)}\right) 
&\geq \lambda(z)^{d-1}\left(2 - \frac{\lambda(\zero)}{\lambda(z)}\right) 
\geq \frac{1}{4} \,.
\label{eq:det}
\end{align}
The first inequality follows from convexity of $\lambda$, so that $\lambda(\zero) \geq \lambda(z) - \ip{\nabla \lambda(z), z}$. 
The last inequality follows from naive bounding because for $z \in B$, $\lambda(z) \geq 1-1/(2d)$ and $\lambda(\zero) \leq 1$.
The claim follows since, by the definition of $\Lambda$ and \cref{eq:kappa}, $P \circ \kappa \circ \pi = \Lambda$, which implies that
\begin{align*}
\int_D \varphi \d{\vol_{d-1}} 
&\geq \int_{\Lambda(B)} \varphi \circ \kappa \circ \pi \circ \Lambda^{-1} \d{\vol_{d-1}} 
\geq \frac{1}{4} \int_B \varphi \circ \kappa \circ \pi \d{\vol_{d-1}}\,,
\end{align*}
where in the first inequality we used the facts that $\Lambda(B) = P(D)$ and projections decrease surface area. The second inequality follows from \cref{eq:det}.

\paragraph{Step 3: Combining}
Let $z \in B$ and $y = \pi(z) \in \partial K$ and $w = \kappa(y) = \Psip(z) x_\star + (1 - \Psip(z)) y \in \partial K$.
Hence, by \cref{eq:Psi} and \cref{lem:los}, the following holds on $B$, 
\begin{align}
(f - g)^2 \circ \pi + (f - g)^2 \circ \kappa \circ \pi \geq \frac{1}{2} (g_\star + \epsilon - f_\star)^2 \Psip^2 \geq \frac{(g_\star + \epsilon - f_\star)^2}{2^{19} d^2}\,.
\label{eq:los-use}
\end{align}
Therefore, by \cref{eq:A}, \cref{eq:proj} and \cref{eq:los-use}, and recalling the definition of $\rho$ in the lemma statement, 
\begin{align*}
\int_{\partial K} (f-g)^2 \d{\rho}
&\geq \frac{1}{\vol_{d-1}(\partial K)} \int_{C \cup D} (f-g)^2 \d{\vol_{d-1}} \\
&\geq  \frac{1}{4 \vol_{d-1}(\partial K)} \int_B \left((f-g)^2 \circ \pi + (f-g)^2 \circ \kappa \circ \pi \right) \d{\vol_{d-1}} \\
&\geq \frac{(g_\star + \epsilon - f_\star)^2}{2^{21} d^2} \left(\frac{\vol_{d-1}(B)}{\vol_{d-1}(\partial K)}\right) \\
&\geq \frac{(g_\star + \epsilon - f_\star)^2}{2^{26} d^2} \min_{\theta \in S^{d-1}} \left(\frac{\vol_{d-1}(P_{\theta}(K))}{\vol_{d-1}(\partial K)}\right) \\
&\geq \frac{1}{2^{26} d^2} \left(\int_{\cK} g \d{\rho} - f_\star\right)^2 \min_{\theta \in S^{d-1}} \left(\frac{\vol_{d-1}(P_{\theta}(K))}{\vol_{d-1}(\partial K)}\right) \,,
\end{align*}
where the final inequality follows since $f_\star \leq g_\star$ by assumption and $g \leq g_\star + \epsilon$ on $\partial K$.
Rearranging yields the result.
\end{proof}

\begin{remark}\label{rem:key}
A convex body $K \subset \R^d$ is in minimal surface area position if its surface area measure (as a measure on the sphere) is isotropic \cite[\S2.3]{ASG15}.
\citeauthor{GP99}~\cite{GP99} show that for convex bodies $K$ in minimal surface area position,
\begin{align*}
\max_{\theta \in S^{d-1}} \left(\frac{\vol_{d-1}(\partial K)}{\vol_{d-1}(P_{\theta}(K))}\right) \leq 2 d\,,
\end{align*}
which is sharp when $K$ is a cube.
Furthermore, for any convex body $K$, there exists a linear bijection $T : \R^d \to \R^d$ such that $TK$ is in minimal surface area position.
Heuristic calculations suggest the calculations in \cref{lem:key} are also sharp when $K$ is a cube, while the technical lemmas used in 
the proof of \cref{thm:main} can likely be improved if all level sets of $\bar f$ are cubes. Exploiting this feels non-trivial, however, and would lead to at most a factor of $d^{1/2}$ improvement
in the final regret bound.
\end{remark}

\begin{proof}[Proof of \cref{thm:main}]
The proof is broken into three parts. 
First we construct the basic exploratory distribution using \cref{lem:key}.
In the second step we argue that only $O(d \log(n))$ exploratory distributions are needed. The final step puts together the pieces using \cref{lem:combine}.
By means of a translation, and without loss of generality, choose coordinates on $\cK$ such that $\zero$ is the minimiser of $\bar f$.

\paragraph{Step 1: Constructing exploratory distributions} 
Let $\epsilon > 0$ and define the level set $K_\epsilon = \{x : \bar f(x) \leq \bar f_\star + \epsilon\}$. 
Let $\cF_\epsilon$ be the set of all $f \in \cF$ for which $f_\star \leq \bar f_\star$ and
\begin{align*}
\Psi_{K_\epsilon}\left(\argmin_{x \in \cK} f(x)\right) \in \left[\frac{1}{128d}, \frac{1}{32d}\right]\,.
\end{align*}
Let $T : \R^d \to \R^d$ be a linear bijection such that $TK_\epsilon$ is in minimal surface area position
and define a probability measure $\rho_\epsilon$ on $\partial K$ by
\begin{align*}
\rho_\epsilon = \frac{\vol_{d-1} \circ T}{\vol_{d-1}(\partial(TK_\epsilon))}\,,
\end{align*}
which is the pullback of the normalised surface area measure on $\partial(TK_\epsilon)$. That is, for any measurable $\varphi : \cK \to \R$,
\begin{align*}
\int_{\partial K_\epsilon} \varphi \d{\rho_\epsilon} = \frac{1}{\vol_{d-1}(\partial(TK_\epsilon))} \int_{\partial(TK_\epsilon)} \varphi \circ T^{-1} \d{\vol_{d-1}}\,.
\end{align*}
Let $f \in \cF_\epsilon$ with minimiser $x_\star \in \cK$.
Since $T$ is a linear bijection, both $f \circ T^{-1}$ and $\bar f \circ T^{-1}$ are convex functions from $\cD = T \cK \to [0,1]$. 
By \cref{lem:invariant} in \cref{sec:technical}, 
\begin{align*}
\Psi_{TK_\epsilon}( Tx_\star ) = \Psi_{K_\epsilon}(x_\star ) \in \left[\frac{1}{128d}, \frac{1}{32d}\right] \,.
\end{align*}
Hence, by \cref{lem:key} and \cref{rem:key}, for any $f \in \cF_\epsilon$,
\begin{align*}
\int_{\partial K_\epsilon} \bar f \d{\rho_{\epsilon}} - f_\star 
&= \frac{1}{\vol_{d-1}(\partial(TK_\epsilon))} \int_{\partial (TK_\epsilon)} \bar f \circ T^{-1} \d{\vol}_{d-1} - f_\star \\
&\leq 2^{13} \sqrt{\frac{2d^3}{\vol_{d-1}(\partial(TK_\epsilon))} \int_{\partial (TK_\epsilon)} (\bar f - f)^2 \circ T^{-1} \d{\vol_{d-1}}} \\
&= 2^{13} \sqrt{2d^3 \int_{\partial K_\epsilon} (\bar f - f)^2 \d{\rho_\epsilon}}\,,
\end{align*}
which shows that $\rho_\epsilon$ satisfies \cref{eq:ass} for all $f \in \cF_\epsilon$.

\paragraph{Step 2: Constructing a partition}
We start by defining a subset $\cF_0 \subset \cF$ on which the Dirac at $\zero$ is a good exploratory distribution. Let
\begin{align*}
\cF_0 = \left\{f \in \cF : \bar f_\star - f_\star \leq 2(\bar f_\star - f(\zero)) \right\} \cup \left\{f \in \cF : f_\star \geq \bar f_\star - 1/n\right\} \,.
\end{align*}
Let $\rho_0$ be a Dirac at $\zero$. Then,
\begin{align*}
\int_\cK \bar f \d{\rho_0} - f_\star 
&= \bar f_\star - f_\star 
\leq \frac{1}{n} + 2 |\bar f_\star - f(\zero)| 
= \frac{1}{n} + \sqrt{4 \int_{\cK} \left(\bar f - f\right)^2 \d{\rho_0}}\,,
\end{align*}
which confirms the claim that $\rho_0$ is a good exploratory distribution for $\cF_0$.
On the other hand, if $f \notin \cF_0$ with minimiser $x_\star \in \cK$, then by the definition of $\cF_0$ and
the assumption that $f$ is $n$-Lipschitz,
\begin{align}
\frac{1}{2n} \leq f(\zero) - f(x_\star) \leq n \norm{x_\star}\,. 
\label{eq:lip}
\end{align}
The above shows that $x_\star \neq \zero$.
Hence, by a continuity argument (\cref{lem:cont}), there exists an $\epsilon > 0$ such that $\Psi_{K_\epsilon}(x_\star) = 1/(64d)$.
The assumption that $\bar f$ is $m$-strongly convex means that $K_\epsilon \subset \{x \in \R^d : \norm{x} \leq \sqrt{2\epsilon / m}\}$ and hence
\begin{align*}
\frac{1}{64d} = \Psi_{K_\epsilon}(x_\star) 
\leq \frac{2\sqrt{2\epsilon/m}}{\norm{x_\star}}
\stackrel{\text{\tiny \cref{eq:lip}}}\leq 4n^2 \sqrt{2\epsilon/m}\,.
\end{align*}
Rearranging shows that $\epsilon \geq m / (2^{17} d^2 n^4) \triangleq \epsilon_0$. 
Let $\gamma = 1 + 1/(9d)$ and $\cE = \epsilon_0\{1, \gamma, \gamma^2,\ldots\} \cap [0,1]$.
Since $\epsilon \geq \epsilon_0$, there exists a $\delta \in \cE$ such that $\delta \in [\epsilon / \gamma, \epsilon]$.
By the convexity of $\bar f$ and the monotonicity of level sets, $\frac{1}{\gamma} K_\epsilon \subset K_{\epsilon / \gamma} \subset K_{\delta} \subset K_\epsilon$. 
Hence, by \cref{cor:monotone},
\begin{align*}
\Psi_{K_\delta}(x_\star) \in \left[\frac{1}{128d}, \frac{1}{32d}\right]\,,
\end{align*}
shows that $f \in \cF_\delta$ and hence $\cF = \cF_0 \cup \bigcup_{\epsilon \in \cE} \cF_\epsilon$.
The previous step demonstrated the existence of an exploratory distribution for each $\cF_\epsilon$ with $\epsilon \in \cE$.

\paragraph{Step 3: Combining}
By definition, $|\cE| \leq 1 + \ceil{\log_\gamma(1/\epsilon_0)}$ with $\gamma = 1 + 1/(9d)$.
Combining \cref{lem:combine} with the exploratory distributions and partitions in steps 1 and 2 completes the proof.
\end{proof}

\section{Technical lemmas}\label{sec:technical}

Here we collect the necessary lemmas. The first few concern properties of $\Psi$: concavity, invariance, approximate monotonicity and continuity.

\paragraph{Concavity of $\Psi$}
The first lemma establishes the concavity of $z \mapsto \Psi_K(x, z)$.
The result is reminiscent of Brunn's concavity principle but relies on a very different proof using the perspective construction.

\begin{lemma}\label{lem:psi-concave}
Let $K \subset \R^d$ be a convex body with $\zero \in K$.
Then, for any $x \notin K$, the function $z \mapsto \Psi_K(x, z)$ is concave on $P_x(K)$.
\end{lemma}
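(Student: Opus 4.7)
My plan is to reformulate the claim as the convexity of the entry parameter $s_0(z) := 1 - \Psi_K(x, z)$ and then realise $s_0$ as the ``entry gauge'' from $\zero$, in a particular direction, of an explicitly convex set produced by a perspective construction. I would start by choosing coordinates $(v, t) \in x^\perp \times \R$ so that $y = v + tx$ lies in $K$ iff $v \in P_x(K)$ and $a(v) \leq t \leq b(v)$, with $a$ convex and $b$ concave on $P_x(K)$. Since $\zero \in K$ and $x \notin K$, $a(\zero) \leq 0$ and $b(\zero) < 1$, so setting $\tilde a := 1 - a$ and $\tilde b := 1 - b$ gives $\tilde a$ concave and $\tilde b$ convex. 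The parametric point $(1-s)x + s\pi_K(x, z)$ on the chord has coordinates $(sz,\, 1 - s\tilde a(z))$; the lower-bound condition $a(sz) \leq 1 - s\tilde a(z)$ holds automatically from convexity of $a$ and $a(\zero) \leq 0$, so membership in $K$ reduces to $\phi_z(s) := s\tilde a(z) - \tilde b(sz) \geq 0$. Then $s_0(z) = \inf\{s > 0 : \phi_z(s) \geq 0\}$, noting $\phi_z(0) < 0$ and $\phi_z(1) = b(z) - a(z) \geq 0$.

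Next, I would make the change of variables $(z, s) \mapsto (v, s) := (sz, s)$ and define
\[
\tilde \phi(v, s) := s\,\tilde a(v/s) - \tilde b(v), \qquad s > 0,
\]
so that $\tilde \phi(sz, s) = \phi_z(s)$. The first summand is the perspective of the concave function $\tilde a$ and is concave on $\{s > 0\}$; the second summand is convex in $v$ alone, so $-\tilde b$ is concave. Hence $\tilde \phi$ is jointly concave in $(v, s)$, and its superlevel set $\tilde C := \{(v, s) : s > 0,\ \tilde \phi(v, s) \geq 0\}$ is convex. Because $\phi_z(0) < 0$ the closure of $\tilde C$ excludes the origin, and by construction $s_0(z)$ is the smallest $\lambda > 0$ with $\lambda(z, 1) \in \tilde C$: it is the entry gauge of $\tilde C$ along the ray from $\zero$ in direction $(z, 1)$.

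To finish, I would invoke the following general fact: for any convex set $D \subset x^\perp \times \R_{>0}$ with $\zero \notin \overline D$, the entry gauge $g(z) := \inf\{\lambda > 0 : (\lambda z, \lambda) \in D\}$ has $1/g$ concave on its (convex) domain, hence $g$ itself is convex since $g > 0$. The proof is a short harmonic-mean manipulation: for $z_1, z_2$ in the domain, the points $(g(z_i) z_i, g(z_i)) \in \overline D$ combine under weight $\mu$ into $(\bar s \bar z, \bar s) \in \overline D$ with $\bar s = \mu g(z_1) + (1-\mu) g(z_2)$ and $\bar z = \alpha z_1 + (1-\alpha) z_2$ for $\alpha = \mu g(z_1)/\bar s$; solving for $\mu$ in terms of $\alpha$ yields $\bar s = 1/(\alpha/g(z_1) + (1-\alpha)/g(z_2))$, and $g(\bar z) \leq \bar s$ gives $1/g(\alpha z_1 + (1-\alpha) z_2) \geq \alpha/g(z_1) + (1-\alpha)/g(z_2)$. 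Applied to $D = \tilde C$ and $g = s_0$, this proves convexity of $s_0$ and therefore concavity of $\Psi_K(x, \cdot)$.

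The main obstacle I expect is spotting the right change of variables $(z, s) \mapsto (sz, s)$: before passing to the perspective coordinates, the entry equation $s\tilde a(z) = \tilde b(sz)$ couples $z$ and $s$ nonlinearly and it is not transparent that $s_0$ should be convex. After the reparametrisation, joint concavity of $\tilde \phi$ falls out from the standard perspective-of-a-concave-function construction, and the remainder of the argument is routine convex analysis.
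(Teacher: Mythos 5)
Your proof is correct and reaches the same conclusion, but via a genuinely different reparametrisation. The paper also writes the entry parameter $1-\Psi_K(x,z)$ as the first root of a boundary condition and introduces a perspective, but it keeps $z$ fixed and substitutes $s = 1/t$, obtaining $h(z,s) = s f(z/s) - g(z) + s - 1$, the perspective of the \emph{near} (convex) boundary function. It then argues directly that the largest zero $s(z)$ of $h(z,\cdot)$ is concave in $z$ by convexity of $h$, and finishes with the same harmonic-mean step $1/(\alpha s + (1-\alpha)t) \le \alpha/s + (1-\alpha)/t$. You instead keep the chord parameter $s=t$ and substitute $v = sz$, which makes the perspective act on the \emph{far} (concave) boundary function $\tilde a$, producing a jointly concave $\tilde\phi(v,s)$ whose superlevel set $\tilde C$ is convex; you then invoke an abstract ``entry gauge of a convex set not containing the origin'' lemma, whose harmonic-mean proof is the same computation as the paper's last display, just packaged as a standalone fact. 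Both routes discover the same hidden convex structure; yours modularises the reciprocal step and makes the ``entry time along a ray into a convex set'' picture explicit, at the cost of one extra change of variables. One small technical point you should make explicit, which the paper also glosses over: to conclude $g(\bar z) \le \bar s$ from $(\bar s \bar z, \bar s) \in \overline{\tilde C}$ you are implicitly using that the infimum defining $g$ is unchanged on passing to $\overline{\tilde C}$ (equivalently, that $\tilde\phi$ is upper semicontinuous so $\tilde C$ is relatively closed); similarly, to know that $\tilde a$ is bounded so $s_k \tilde a(v_k/s_k) \to 0$ when showing $\zero \notin \overline{\tilde C}$ you use that $P_x(K)$ is compact. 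Neither is a real gap, but they deserve a sentence.
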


\begin{proof}
Let $\tilde \Psi(z) = \Psi_K(x, z)$.
Parameterise $K = \{z - \alpha x  : f(z) \leq \alpha \leq g(z), z \in P_x(K), \alpha \in \R\}$,
where $f : P_x(K) \to \R$ is convex and $g : P_x(K) \to \R$ is concave (\cref{fig:lem:concave}).
Parameterise the chord connecting $x$ and $z - g(z) x \in \partial K$ by $y(t) = (1 - t) x + t z - t g(z) x$. 
Then,
\begin{align*}
1 - \tilde \Psi(z) 
&= \min\{t \in (0,1] : y(t) \in K\} \\
&= \min\{t \in (0,1] :  f(tz) \leq t + t g(z) - 1\} \\
&= \min\{1/s : s f(z/s) - g(z) + s - 1 \leq 0, s \in [1, \infty)\}\,. 
\end{align*}
Let $h(z, s) = s f(z/s) - g(z) + s - 1$, which is the perspective of $f$ minus a concave function and hence is convex on $P_x(K) \times [1, \infty)$.
Note that $h(z, 1) = f(z) - g(z) \leq 0$, while the fact that $x \notin K$ implies that $\lim_{s \to \infty} h(z, s) = \infty$.
Hence, for any $z, w \in P_x(K)$, there exist largest reals $s$ and $t$ such that $h(z, s) = 0$ and $h(w, t) = 0$ so that $1 - \tilde \Psi(z) = 1/s$ and $1 - \tilde \Psi(w) = 1/t$.
By convexity, $h(\alpha z + (1 - \alpha) w, \alpha s + (1 - \alpha)t) \leq 0$ and therefore the largest value $r$ for which $h(\alpha z + (1 - \alpha) w, r) = 0$ is
at least $\alpha s + (1 - \alpha) t$. Therefore,
\begin{align*}
1 - \tilde \Psi(\alpha z + (1 - \alpha) w)  
= \frac{1}{r} 
&\leq \frac{1}{\alpha s + (1 - \alpha) t}  \\
&\leq \frac{\alpha}{s} + \frac{1-\alpha}{t} \\
&= \alpha (1 - \tilde \Psi(z)) + (1 - \alpha) (1 - \tilde \Psi(w))\,. \qedhere
\end{align*}
\end{proof}

\begin{center}
\includegraphics[width=6cm]{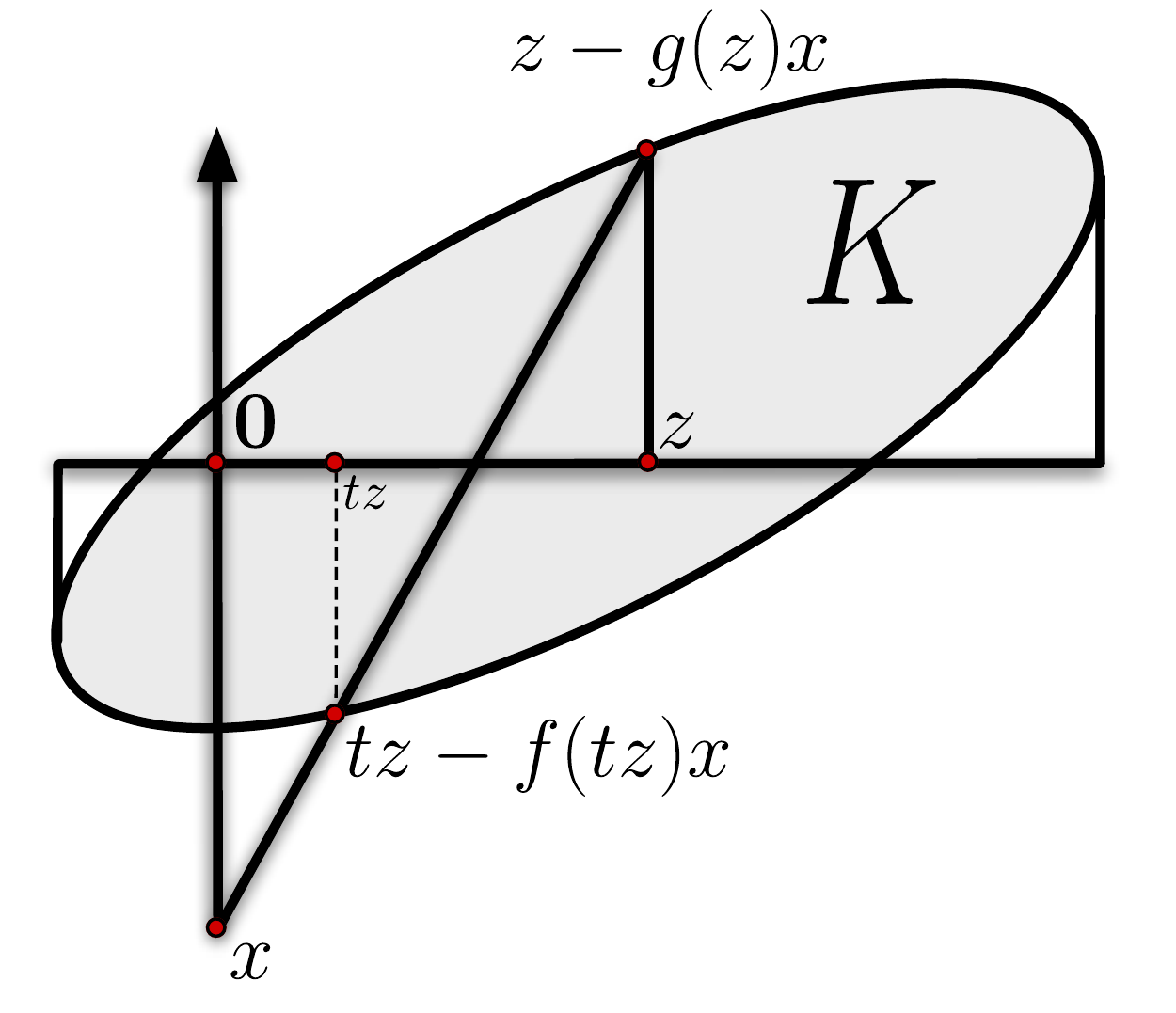}
\captionof{figure}{The construction used in the proof of \cref{lem:psi-concave}.
}\label{fig:lem:concave}
\end{center}

\paragraph{Invariance of $\Psi$}
The next simple lemma shows that $\Psi$ is invariant under linear bijections. 

\begin{lemma}\label{lem:invariant}
Let $T : \R^d \to \R^d$ be a linear bijection, $K \subset \R^d$ be a convex body with $\zero \in K$ and let $x \notin K$. Then
$\Psi_K(x) = \Psi_{TK}(T x)$.
\end{lemma}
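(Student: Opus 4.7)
The plan is to show that $T$ induces a compatible linear bijection between the shadows $P_x(K)$ and $P_{Tx}(TK)$, under which the integrands of $\Psi_K(x)$ and $\Psi_{TK}(Tx)$ agree pointwise. The key observation is that although $T$ does not map $x^\perp$ into $(Tx)^\perp$ in general, composing with an orthogonal projection gives a bijection, and $\Psi_K(x, z)$ involves only ratios of lengths along a single line, which are preserved by any linear map.

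First, I would define $S : x^\perp \to (Tx)^\perp$ by $S(z) = P_{Tx}(Tz)$. This is linear by construction; injectivity follows because $S(z) = 0$ forces $Tz \in \operatorname{span}(Tx)$, hence $z \in \operatorname{span}(x) \cap x^\perp = \{\zero\}$, so $S$ is a linear bijection between $(d-1)$-dimensional spaces. Next I would verify the key identity
\begin{align*}
P_{Tx}(TK) = S(P_x(K)).
\end{align*}
The inclusion $\subseteq$ follows because any $y \in K$ can be written $y = z + tx$ with $z = P_x(y)$, and then $P_{Tx}(Ty) = P_{Tx}(Tz + tTx) = P_{Tx}(Tz) = S(z)$; the reverse inclusion is the same calculation read backwards.

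Second, I would show that $T$ maps the chord used to define $\Psi_K(x, z)$ to the chord used to define $\Psi_{TK}(Tx, S(z))$. The line $\{S(z) + tTx : t \in \R\}$ equals $T\{z + sx : s \in \R\}$ after a reparametrisation, so
\begin{align*}
TK \cap \{S(z) + tTx : t \in \R\} = T\bigl(K \cap \{z + sx : s \in \R\}\bigr).
\end{align*}
Because $\langle T(z + sx), Tx\rangle = s\norm{Tx}^2 + \langle Tz, Tx\rangle$ is increasing in $s$ while $\langle z + sx, x\rangle = s\norm{x}^2$ is also increasing in $s$, the argmin defining $\pi$ is achieved at the same value of $s$ in both problems, giving $\pi_{TK}(Tx, S(z)) = T\pi_K(x, z)$. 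Consequently the segment $[Tx, \pi_{TK}(Tx, S(z))]$ is the $T$-image of $[x, \pi_K(x,z)]$, and $TK \cap T[\cdot,\cdot] = T(K \cap [\cdot,\cdot])$. Since $T$ restricted to any single affine line scales one-dimensional Hausdorff measure by the same constant factor, the ratio defining $\Psi$ is preserved, and so $\Psi_{TK}(Tx, S(z)) = \Psi_K(x, z)$ for every $z \in P_x(K)$.

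Finally I would push this pointwise identity through the average by changing variables $w = S(z)$ on the shadow. The Jacobian factor $\abs{\det S}$ appears in both $\vol_{d-1}(P_{Tx}(TK)) = \abs{\det S}\,\vol_{d-1}(P_x(K))$ and in $d\vol_{d-1}(w) = \abs{\det S}\,d\vol_{d-1}(z)$, so it cancels and we obtain $\Psi_{TK}(Tx) = \Psi_K(x)$. The only mildly delicate step is verifying $P_{Tx}(TK) = S(P_x(K))$ and that the projection $P_{Tx}\circ T$ on $x^\perp$ is genuinely a bijection onto $(Tx)^\perp$; everything else is bookkeeping that lengths along a line and orthogonal projections interact correctly under linear maps.
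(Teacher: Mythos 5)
Your argument is correct and follows essentially the same route as the paper's proof: your map $S = P_{Tx}\circ T$ on $x^\perp$ is exactly the paper's linear bijection $\Lambda$, you establish the same pointwise identity $\Psi_{TK}(Tx, S(z)) = \Psi_K(x,z)$ by noting that $T$ sends the chord $[x,\pi_K(x,z)]$ (and its intersection with $K$) to $[Tx,\pi_{TK}(Tx,S(z))]$ (and its intersection with $TK$) while preserving length ratios on a line, and you conclude by the same change-of-variables cancellation of $|\det S|$. You spell out a few steps (injectivity of $S$, the identity $P_{Tx}(TK)=S(P_x(K))$, the monotonicity of $\ip{\cdot,x}$ and $\ip{\cdot,Tx}$ along the parametrised lines) that the paper leaves implicit, but the substance of the argument is identical.
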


\begin{proof}
Let $z \in P_x(K)$ and $\alpha \in \R$ be the smallest value such that $z + \alpha x \in K$.
By definition, $y \triangleq z + \alpha x = \pi_K(x, z)$ and since $z + \alpha x \in K$ if and only if $Tz + \alpha Tx \in TK$, it
follows that 
\begin{align*}
T \pi_K(x, z) = Ty =  Tz + \alpha Tx = \pi_{TK}(Tx, P_{Tx}(Tz)) = \pi_{TK}(Tx, \Lambda(z)) \,,
\end{align*}
where we introduced $\Lambda = P_{Tx} \circ T$.
Thus,
\begin{align*}
\Psi_{TK}(Tx, \Lambda(z)) = \frac{\vol_{d-1}([Tx, Ty] \cap TK)}{\vol_{d-1}([Tx, Ty])} = \frac{\vol_{d-1}([x, y] \cap K)}{\vol_{d-1}([x, y])} = \Psi_K(x, z)\,.
\end{align*}
Then, using the fact that $\Lambda$ is a linear bijection between $P_x(K)$ and $P_{Tx}(TK)$,
\begin{align*}
\Psi_K(x)
&= \frac{1}{\vol_{d-1}(P_x(K))} \int_{P_x(K)} \Psi_K(x, z) \d{\vol_{d-1}(z)} \\
&= \frac{1}{\vol_{d-1}(P_x(K))} \int_{P_x(K)} \Psi_{TK}(Tx, \Lambda(z)) \d{\vol_{d-1}(z)} \\
&= \frac{1}{\vol_{d-1}(P_{Tx}(TK))} \int_{P_{Tx}(TK)} \Psi_{TK}(Tx, w) \d{\vol_{d-1}(w)} \\
&= \Psi_{TK}(Tx)\,. \qedhere
\end{align*}
\end{proof}

\paragraph{Monotonicity and continuity of $\Psi$}
The next few lemmas provide a kind of monotonicity and continuity of the mapping $K \mapsto \Psi_K(x)$, which is used in the proof of \cref{thm:main} to establish the existence of
a suitable grid over level sets.

\begin{lemma}\label{lem:monotone}
Let $A \subset \R^d$ be a convex body with $\zero \in A$ and $\gamma > 1$.
Then $\Psi_{\gamma A}(x) \geq \Psi_A(x)$ for all $x \notin \gamma A$.
\end{lemma}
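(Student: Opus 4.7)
The plan is to reduce the statement to a pointwise inequality $\Psi_{\gamma A}(x, \gamma z) \geq \Psi_A(x, z)$ for every $z \in P_x(A)$, and then pass to the averaged $\Psi$ by a routine change of variables using $P_x(\gamma A) = \gamma P_x(A)$. Since $\gamma > 1$ and $x \notin \gamma A$ implies $x \notin A$, both quantities are defined.

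First I would verify the behaviour of $\pi$ under scaling. A point $\gamma z + tx$ lies in $\gamma A$ iff $z + (t/\gamma)x \in A$, so the fibre over $\gamma z$ in $\gamma A$ is exactly $\gamma$ times the fibre over $z$ in $A$, and the arg-min of $\ip{\cdot,x}$ along that fibre scales in the same way. This gives $\pi_{\gamma A}(x, \gamma z) = \gamma\, \pi_A(x,z) =: \gamma y$.

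For the pointwise inequality I would parametrise both chords by $s \in [0,1]$, so $(1-s)x + sy$ traces $[x,y]$ and $(1-s)x + s\gamma y$ traces $[x, \gamma y]$. The second point lies in $\gamma A$ iff $(1-s)(x/\gamma) + sy \in A$. Let $s_\star \in (0,1)$ be the entry parameter of $[x,y]$ into $A$, so $p := (1-s_\star)x + s_\star y \in A$. The key observation is the identity
\begin{align*}
(1-s_\star)(x/\gamma) + s_\star y \;=\; \tfrac{1}{\gamma}\, p \;+\; \bigl(1 - \tfrac{1}{\gamma}\bigr)\, s_\star y,
\end{align*}
which exhibits this point as a convex combination of $p \in A$ and $s_\star y \in A$ (the latter because $\zero, y \in A$). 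Hence it lies in $A$, so at parameter $s = s_\star$ the chord $[x,\gamma y]$ already lies in $\gamma A$. The entry parameter of $[x,\gamma y]$ into $\gamma A$ is therefore no larger than $s_\star$, and so $\Psi_{\gamma A}(x,\gamma z) = 1 - s^\star \geq 1 - s_\star = \Psi_A(x,z)$.

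Finally, substitute $z = \gamma w$ in the defining integral for $\Psi_{\gamma A}(x)$. Both $\vol_{d-1}(P_x(\gamma A)) = \gamma^{d-1}\vol_{d-1}(P_x(A))$ and the Jacobian $\gamma^{d-1}$ cancel, reducing the expression to an average of $\Psi_{\gamma A}(x, \gamma w)$ against the uniform measure on $P_x(A)$. The pointwise bound then yields $\Psi_{\gamma A}(x) \geq \Psi_A(x)$. The only real content is the convex-combination identity above; everything else is bookkeeping. The mild subtlety is to confirm that $\pi$ scales correctly and that the reparametrisation of chord length by $s \in [0,1]$ makes the depth/distance ratio literally equal to $1 - s_\star$, so that the inequality on entry parameters translates directly into an inequality on $\Psi$.
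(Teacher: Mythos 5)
Your proof is correct. The pointwise bound $\Psi_{\gamma A}(x,\gamma z) \geq \Psi_A(x,z)$ is established via a clean convex-combination identity, and the change of variables $z = \gamma w$ handles the normalisation correctly since $P_x(\gamma A) = \gamma P_x(A)$ scales the Hausdorff measure by $\gamma^{d-1}$ in both numerator and denominator.

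The route differs from the paper's in a mild but real way. The paper fixes the body $\gamma A$ and the shadow point $z$, and compares the two viewpoints $x$ and $\gamma x$: a similar-triangles argument gives $\Psi_{\gamma A}(x,z) \geq \Psi_{\gamma A}(\gamma x, z)$, integration over $P_x(\gamma A) = P_{\gamma x}(\gamma A)$ gives $\Psi_{\gamma A}(x) \geq \Psi_{\gamma A}(\gamma x)$, and then \cref{lem:invariant} (invariance of $\Psi$ under linear bijections, applied to $T = \gamma\,\mathrm{Id}$) finishes with $\Psi_{\gamma A}(\gamma x) = \Psi_A(x)$. You instead fix the viewpoint $x$ and compare the two bodies $A$ and $\gamma A$ directly, using the identity $(1-s_\star)(x/\gamma) + s_\star y = \tfrac{1}{\gamma}p + (1-\tfrac{1}{\gamma})s_\star y$ together with $\zero \in A$ to show the rescaled ray enters $\gamma A$ no later than parameter $s_\star$. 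Modulo \cref{lem:invariant} the two pointwise claims are the same statement, but your version is self-contained — it neither cites the invariance lemma nor requires a figure, at the cost of the slightly less visual algebraic identity. Both are equally rigorous; the paper's has the advantage of reusing an already-proved lemma, yours has the advantage of not needing to.
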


\begin{proof}
The first step is to argue that $\Psi_{\gamma A}(x, \gamma z) \geq \Psi_{\gamma A}(\gamma x, z)$, which follows, using the notation in \cref{fig:lem:monotone}, because
\begin{align*}
\Psi_{\gamma A}(x, z) = \frac{|y - w|}{|y - x|} = \frac{\norm{y - u}}{\norm{y - \gamma x}} \geq \frac{|y - v|}{|y - \gamma x|} = \Psi_{\gamma A}(\gamma x, z) \,.
\end{align*}
The result follows from the fact that $P_x(\gamma A) = P_{\gamma x}(\gamma A)$ and \cref{lem:invariant}:
\begin{align*}
\Psi_{\gamma A}(x) 
&= \int_{P_x(\gamma A)} \Psi_{\gamma A}(x, z) \d{\vol_{d-1}}(z) \\
&\geq \int_{P_{\gamma x}(\gamma A)} \Psi_{\gamma A}(\gamma x, z) \d{\vol_{d-1}}(z)
= \Psi_{\gamma A}(\gamma x)
\stackrel{\text{\tiny{Lem.\ \ref{lem:invariant}}}}= \Psi_A(x)\,. \qedhere
\end{align*}
\end{proof}

\begin{center}
\includegraphics[width=8cm]{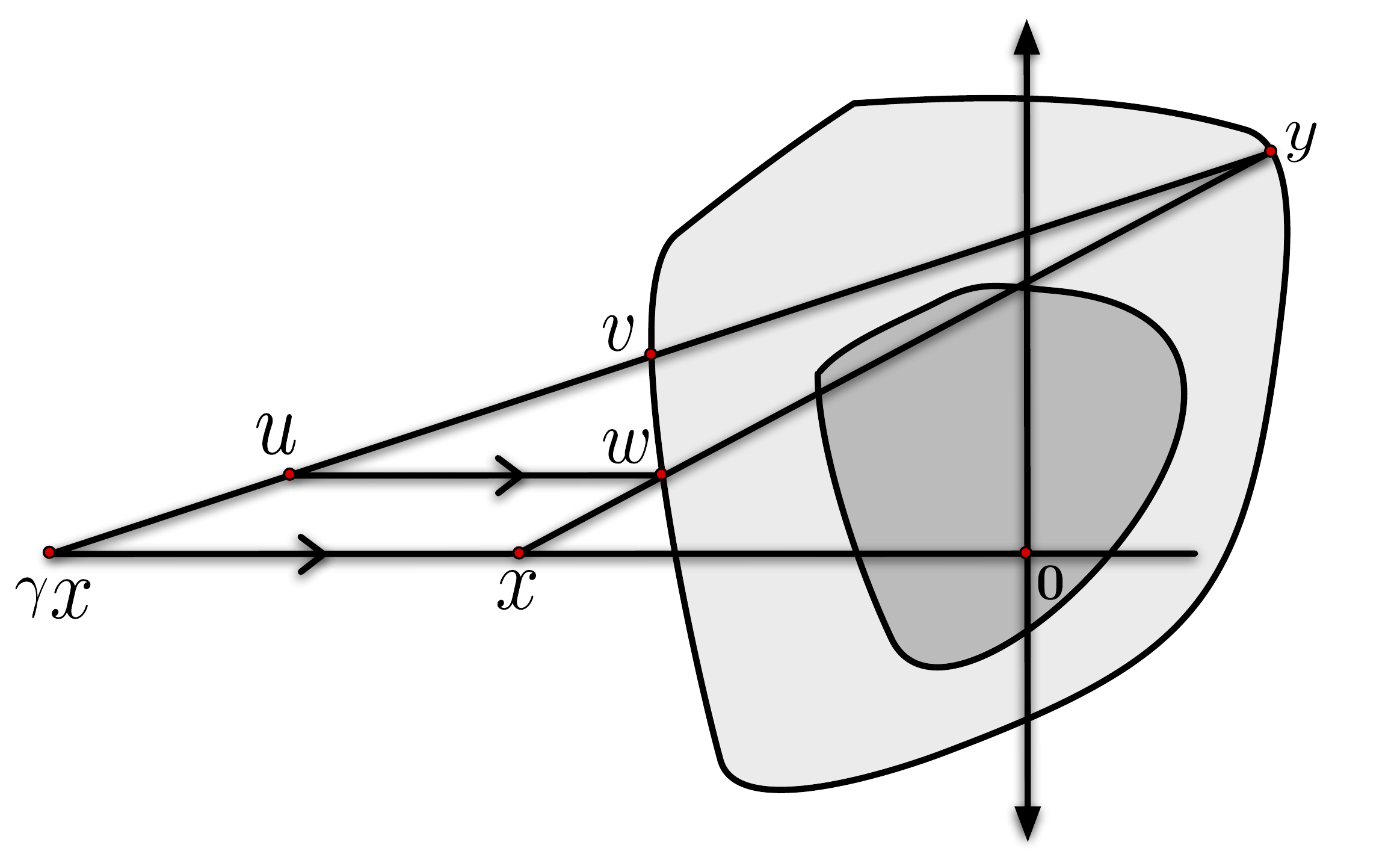}
\captionof{figure}{The construction used in the proof of \cref{lem:monotone}. The point $y$ is $\pi_{\gamma A}(z)$ for some $z \in P_x(\gamma A)$.
The points $v$ and $w$ are chosen so that $[y, \gamma x] \cap \gamma A = [v, y]$ and $[y, x] \cap \gamma A = [y, w]$.
Finally, $u$ is chosen on the chord $[\gamma x, y]$ so that $u - w$ is parallel to $x$.
}\label{fig:lem:monotone}
\end{center}

Recall the definition of $\Psi^\infty_K(x)$ just after \cref{def:psi}.

\begin{lemma}\label{lem:Psi}
Let $A$ and $B$ be convex bodies such that $\zero \in A \subset B \subset \gamma A$ for $\gamma > 1$. 
Assume that $x \notin \gamma A$ and $\Psi^\infty_{\gamma A}(x) \leq 1/2$. 
Then $\Psi_{\gamma A}(x) \leq \left(2\gamma - 1\right)^{d+1} \Psi_B(x)$.
\end{lemma}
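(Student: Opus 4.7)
The plan is to decompose the ratio $\Psi_{\gamma A}(x)/\Psi_B(x)$ as the product $(\Psi_{\gamma A}(x)/\Psi_A(x)) \cdot (\Psi_A(x)/\Psi_B(x))$ and bound each factor separately: the second via a pointwise chord comparison combined with a shadow-volume ratio (giving $\gamma^{d-1}$), and the first via an invariance-based stability estimate exploiting $\Psi^\infty_{\gamma A}(x) \leq 1/2$ (giving $(2\gamma-1)^{d+1}/\gamma^{d-1}$). The product is then exactly $(2\gamma-1)^{d+1}$.

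First I would establish the pointwise inequality $\Psi_A(x,z) \leq \Psi_B(x,z)$ for every $z \in P_x(A)$. Along any ray from $x$, the $A$-chord $[\sigma_-, \sigma_+]$ sits inside the $B$-chord $[\tau_-, \tau_+]$, so $0 \leq \tau_- \leq \sigma_- \leq \sigma_+ \leq \tau_+$. The desired bound $(\tau_+-\tau_-)/\tau_+ \geq (\sigma_+-\sigma_-)/\sigma_+$ rearranges to $\tau_+ \sigma_- \geq \sigma_+ \tau_-$, which is immediate from $\tau_+ \geq \sigma_+$ and $\sigma_- \geq \tau_-$. Integrating this pointwise bound over $P_x(A) \subset P_x(B)$ and using $\vol_{d-1}(P_x(B)) \leq \vol_{d-1}(P_x(\gamma A)) = \gamma^{d-1} \vol_{d-1}(P_x(A))$ yields $\Psi_B(x) \geq \gamma^{-(d-1)} \Psi_A(x)$, equivalently $\Psi_A(x)/\Psi_B(x) \leq \gamma^{d-1}$.

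Second and technically more demanding, I would prove the scaling bound $\Psi_{\gamma A}(x) \leq (2\gamma-1)^{d+1} \gamma^{-(d-1)} \Psi_A(x)$. Applying \cref{lem:invariant} with $T = \gamma^{-1} I$ rewrites this as $\Psi_A(x/\gamma) \leq (2\gamma-1)^{d+1} \gamma^{-(d-1)} \Psi_A(x)$, a Lipschitz-type estimate for $\Psi_A$ under the radial contraction $x \mapsto x/\gamma$. By invariance the hypothesis translates to $\Psi^\infty_A(x/\gamma) \leq 1/2$, which forces every chord from $x/\gamma$ to meet $A$ only in the far half of the corresponding ray. For each direction $\omega$, the chord from $x/\gamma$ in direction $\omega$ lies on a line parallel to the chord from $x$ in the same direction, shifted by $(1-1/\gamma) P_{\omega^\perp}(x)$ in $\omega^\perp$. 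Writing the chord endpoints as $T_\pm(q) - \omega \cdot y$ with $T_+$ concave and $T_-$ convex in $q$ (a consequence of Brunn's concavity for parallel chords), and using the far-half constraint to bound $T_+(q_x/\gamma) - \omega \cdot (x/\gamma)$ against $T_+(q_x) - \omega \cdot x$ and similarly for the chord length $\ell(q) = T_+(q) - T_-(q)$, one obtains the claimed pointwise-in-$\omega$ control over the integrands of $\phi_A(x/\gamma)$ versus $\phi_A(x)$.

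The hard part will be this second step: the exponent $d+1$ has to emerge precisely from the interaction of the $(d-1)$-dimensional shadow Jacobian with two additional factors of $2\gamma-1$ arising from the numerator and denominator of the ratio $\Psi = \ell/r$ under the radial contraction. The far-half condition $\Psi^\infty_{\gamma A}(x) \leq 1/2$ is what keeps the chord endpoints bounded away from the degenerate regime in which these factors could blow up; without it, neither the chord-length ratio $\ell(q_x/\gamma)/\ell(q_x)$ nor the far-distance ratio $(T_+(q_x) - \omega \cdot x)/(T_+(q_x/\gamma) - \omega \cdot (x/\gamma))$ would admit a uniform bound in $\gamma$ and $d$.
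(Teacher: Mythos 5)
Your first step is correct: for $z\in P_x(A)$ the ray $\{z+tx\}$ meets $A$ and $B$ in nested intervals, so writing the distances from $x$ as $\tau_-\le\sigma_-\le\sigma_+\le\tau_+$ gives $\Psi_A(x,z)=(\sigma_+-\sigma_-)/\sigma_+\le(\tau_+-\tau_-)/\tau_+=\Psi_B(x,z)$, and integrating over $P_x(A)\subset P_x(B)\subset P_x(\gamma A)$ yields $\Psi_A(x)/\Psi_B(x)\le\vol_{d-1}(P_x(B))/\vol_{d-1}(P_x(A))\le\gamma^{d-1}$. That much is sound and tracks what the paper uses implicitly at the end of its own proof (the step $\vol_{d-1}(P_x(B))\le\vol_{d-1}(P_x(\gamma A))$).

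The gap is the second step, and it is not a small one: after applying \cref{lem:invariant} to rewrite $\Psi_{\gamma A}(x)=\Psi_A(x/\gamma)$, the claim $\Psi_A(x/\gamma)\le(2\gamma-1)^{d+1}\gamma^{-(d-1)}\Psi_A(x)$ is exactly the lemma in the special case $B=A$ (in fact slightly strengthened by the shadow\hbox{-}volume factor). The invariance reduction therefore restates the problem rather than simplifying it, and the entire content of the lemma is still concentrated in this step. Your sketch of it does not constitute a proof. The quantities $\phi_A$, $q$, $q_x$, $T_\pm$ are never defined; the asserted ``pointwise-in-$\omega$ control'' over the integrands is simply claimed, not derived; and the reparametrisation by direction $\omega$ rather than by the shadow variable $z\in P_x(A)$ silently changes the measure against which $\Psi$ is averaged, which would introduce an additional Jacobian that you neither compute nor account for. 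There is also a real subtlety you appear to have skipped: since $x$ and $x/\gamma$ are parallel, $P_{x/\gamma}=P_x$ and $\pi_A(x/\gamma,z)=\pi_A(x,z)=:y$, so the two quantities $\Psi_A(x,z)$ and $\Psi_A(x/\gamma,z)$ live over the same shadow point, but the chords $[x,y]$ and $[x/\gamma,y]$ are genuinely different segments that pierce $A$ differently --- one cannot simply compare their lengths while reusing the same depth, and any $d$-dependent factor cannot come from a uniform pointwise bound at fixed $z$, so a nontrivial remapping of $z$ (analogous to the paper's $\Lambda(z)=z/(2\gamma-1)$) is unavoidable. For comparison, the paper's proof does all the work in a single pointwise ``length-chasing'' inequality $\Psi_{\gamma A}(x,z)\le(2\gamma-1)^2\Psi_B(x,z/(2\gamma-1))$ obtained from the explicit construction in its figure, using $\Psi^\infty_{\gamma A}(x)\le 1/2$ to keep the auxiliary point $p$ inside $A$, and then closes with concavity of $z\mapsto\Psi_B(x,z)$ (from \cref{lem:psi-concave}) and the change of variables Jacobian $(2\gamma-1)^{d-1}$. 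You would need an argument of comparable precision for your step two; as written, the proposal reduces the lemma to an equally hard unproved claim.
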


\begin{proof}
Let $\Lambda(z) = z / (2 \gamma - 1)$. We claim that for all $z \in P_x(\gamma A)$,
\begin{align}
\Psi_{\gamma A}(x, z) \leq (2 \gamma - 1)^2 \Psi_B(x, \Lambda(z)) \,.
\label{eq:Psi-bound}
\end{align}
Setting the proof of this aside for a moment, the consequence is that
\begin{align*}
\int_{P_x(\gamma A)} \Psi_{\gamma A}(x, z) \d{\vol_{d-1}}(z)
&\leq  (2 \gamma - 1)^2 \int_{P_x(\gamma A)} \Psi_B(x, \Lambda(z)) \d{\vol_{d-1}}(z) \\
&=  (2\gamma - 1)^{d+1}\int_{\Lambda(P_x(\gamma A))} \Psi_B(x, y) \d{\vol_{d-1}}(y) \\
&\leq (2\gamma - 1)^{d+1} \int_{P_x(B)} \Psi_B(x, y) \d{\vol_{d-1}(y)} \,,
\end{align*}
where the last inequality is true because $\Lambda(P_x(\gamma A)) \subset P_x(A) \subset P_x(B)$, since $\gamma/(2\gamma - 1) \leq 1$.
The lemma follows since $\vol_{d-1}(P_x(B)) \leq \vol_{d-1}(P_x(\gamma A))$.
It remains to establish \cref{eq:Psi-bound}, which is a high-school exercise in length chasing. The quantities that follow are defined in the caption of \cref{fig:lem:Psi}.
\begin{align}
\frac{|p - u|}{|p - x|} = \frac{|y - w|}{|y - x|} = \frac{|y - v| |u - q|}{|y - x| |v - q|} = \frac{|u - q|}{|v - q|} \Psi_{\gamma A}(x, z) = \frac{1}{\gamma} \Psi_{\gamma A}(x, z) \,.
\label{eq:p}
\end{align}
Furthermore,
\begin{align*}
\frac{|u - w|}{|y - p|} = 1 - \frac{|p - u|}{|p - x|} = 1 - \frac{1}{\gamma} \Psi_{\gamma A}(x, z) \qquad \quad 
\frac{|u - w|}{|y - q|} = \frac{|v - u|}{|v - q|} = \frac{\gamma - 1}{\gamma}\,.
\end{align*}
Dividing one by the other in the third equality below yields
\begin{align*}
\frac{|r - q|}{|z - q|} 
= \frac{|p - q|}{|y - q|} 
= 1 - \frac{|y - p|}{|y - q|}  
&= 1 - \left(\frac{\gamma - 1}{\gamma}\right) \frac{1}{1 - \Psi_{\gamma A}(x, z) / \gamma} \in \left[\frac{1}{2\gamma - 1}, \frac{1}{\gamma}\right]\,, 
\end{align*}
where the final relation holds because $\Psi^\infty_{\gamma A}(x) \leq 1/2$ by assumption.
Extracting from the above that $|p - q| / |y - q| \leq 1/\gamma$ shows that $p \in A \subset B$.
Combining this with \cref{eq:p} shows there exists a $t \in [r, z]$ such that $\Psi_B(x, t) \geq \Psi_{\gamma A}(x, z) / \gamma$.
Furthermore, the above display also shows that $r \in [\Lambda(z), z]$ and hence, by the concavity of $\Psi_B$,
\begin{align*}
\Psi_B(x, \Lambda(z)) 
&\geq \frac{1}{2 \gamma - 1} \max_{t \in [\Lambda(z), z]} \Psi_B(x, t) \geq \frac{\Psi_{\gamma A}(x, z)}{\gamma(2 \gamma - 1)} \geq \frac{\Psi_{\gamma A}(x, z)}{(2\gamma - 1)^2}\,. \qedhere
\end{align*}
\end{proof}

\begin{center}
\includegraphics[width=8cm]{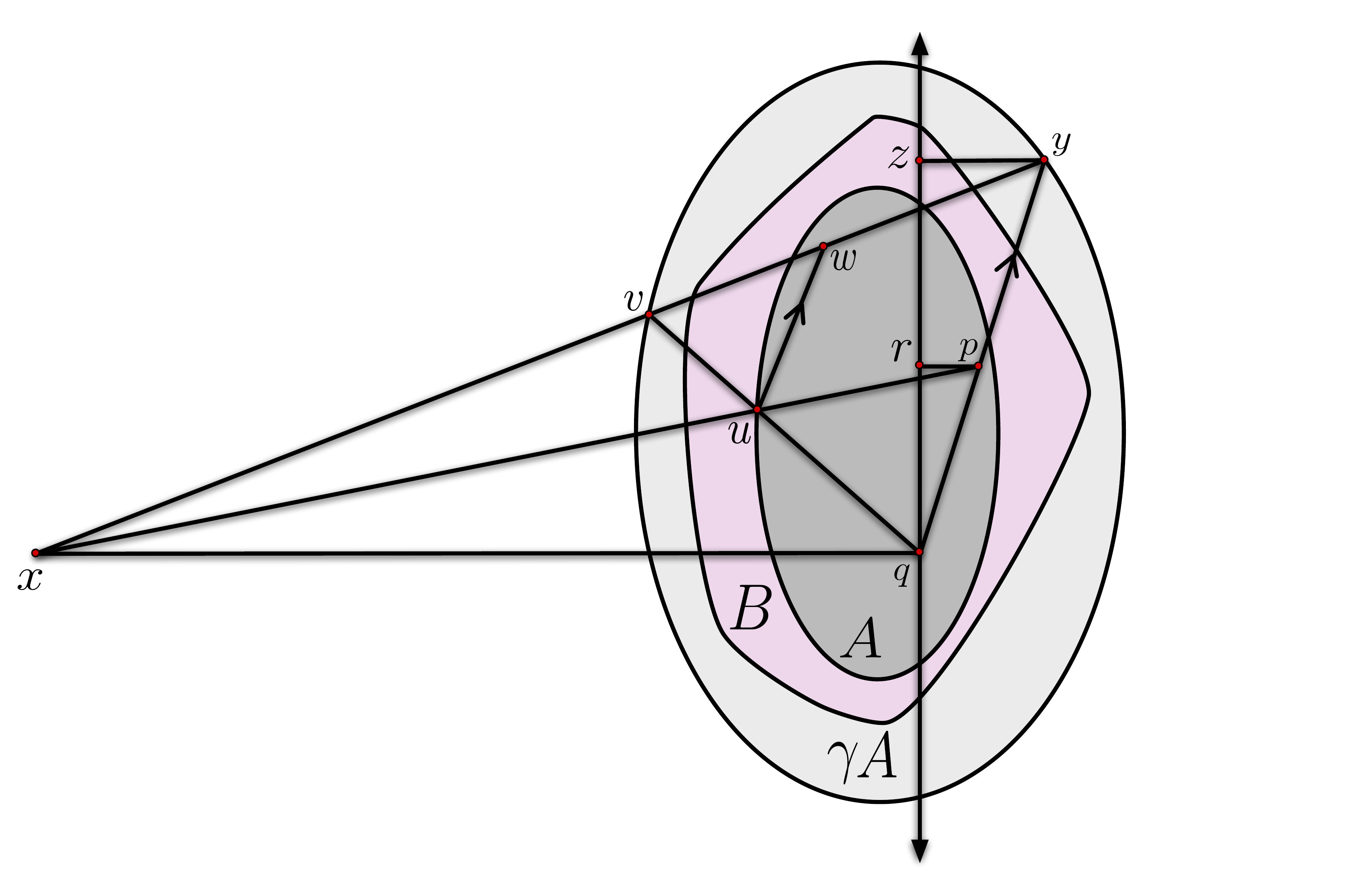}
\captionof{figure}{The construction used in the proof of \cref{lem:Psi}, which
has $q = \zero$, $y = \pi_{\gamma A}(x, z)$. The point $v$ is chosen so that $[x, y] \cap \gamma A = [v, y]$. The point $u$ is such that $[v,q] \cap A = [u,q]$ and $p$
is the intersection of $[q, y]$ and the affine hull $\aff(\{x, u\})$. Lastly, $w$ is the point in $[v,y]$ such that $[u, w]$ is parallel to $[q,y]$ and $r = P_x(p)$.
}\label{fig:lem:Psi}
\end{center}

\begin{corollary}\label{cor:monotone}
Suppose that $A$ and $B$ are convex bodies and $\zero \in \frac{1}{\gamma} A \subset B \subset A$ for $\gamma = 1 + 1/(9d)$. Then, for any $x \notin A$ with $\Psi_A(x) \leq 1/(2d)$,
\begin{align*}
\frac{\Psi_B(x)}{\Psi_{A}(x)} \in [1/2, 2]\,.
\end{align*}
\end{corollary}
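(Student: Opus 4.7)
I would derive the corollary as a quantitative consequence of pairing the scaling monotonicity from \cref{lem:monotone} with the multiplicative set-perturbation continuity from \cref{lem:Psi}. The essential arithmetic input is that, for the choice $\gamma = 1 + 1/(9d)$,
\begin{align*}
(2\gamma - 1)^{d+1} = \bigl(1 + 2/(9d)\bigr)^{d+1} \leq e^{2(d+1)/(9d)} \leq e^{4/9} < 2\,,
\end{align*}
so the multiplicative loss incurred in each application of \cref{lem:Psi} is absorbed into the factor of $2$ appearing in the conclusion. Before applying \cref{lem:Psi}, I would verify the auxiliary bound $\Psi^\infty_A(x) \leq 1/2$: by \cref{lem:psi-concave}, $z \mapsto \Psi_A(x, z)$ is non-negative and concave on the $(d-1)$-dimensional convex body $P_x(A)$, and a standard polar-coordinate computation (comparing the function to the cone through its maximiser and the boundary, where it remains non-negative) yields $\max \leq d \cdot \operatorname{avg}$, so $\Psi^\infty_A(x) \leq d \cdot \Psi_A(x) \leq 1/2$ by the hypothesis of the corollary.

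\textbf{Lower bound.} For $\Psi_B(x) \geq \Psi_A(x)/2$, I would apply \cref{lem:Psi} with its $A$ and $B$ instantiated as $\tfrac{1}{\gamma}A$ and $B$ respectively. The chain $\tfrac{1}{\gamma}A \subset B \subset \gamma \cdot \tfrac{1}{\gamma}A = A$ is immediate from the hypothesis, and the side conditions $x \notin A$ and $\Psi^\infty_A(x) \leq 1/2$ are already in hand. The conclusion $\Psi_A(x) \leq (2\gamma - 1)^{d+1} \Psi_B(x)$ combined with the arithmetic above delivers the desired inequality.

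\textbf{Upper bound.} For $\Psi_B(x) \leq 2\Psi_A(x)$, I would first verify $x \notin \gamma B$. Since $B \subset A$ and hence $\gamma B \subset \gamma A$, it suffices to show $x \notin \gamma A$. If $x/\gamma \in A$, then both $\zero$ and $x/\gamma$ lie on the chord from $x$ through $\zero$ inside $A$, so writing $\pi_A(x, P_x(\zero)) = -\alpha x$ one obtains $\Psi_A(x, P_x(\zero)) \geq (1/\gamma + \alpha)/(1 + \alpha)$; pairing this with $\Psi^\infty_A(x) \leq 1/2$ forces $\alpha \leq 1 - 2/\gamma$, which is negative for $\gamma < 2$ and gives a contradiction. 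Then \cref{lem:monotone} supplies $\Psi_B(x) \leq \Psi_{\gamma B}(x)$, and \cref{lem:Psi} applied with its $A$ and $B$ instantiated as $B$ and $A$ (the chain $B \subset A \subset \gamma B$ follows from $\tfrac{1}{\gamma}A \subset B$) would give $\Psi_{\gamma B}(x) \leq (2\gamma-1)^{d+1} \Psi_A(x) \leq 2\Psi_A(x)$.

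\textbf{Main obstacle.} The delicate remaining side condition is $\Psi^\infty_{\gamma B}(x) \leq 1/2$, needed for the final application of \cref{lem:Psi}. The sandwich $A \subset \gamma B \subset \gamma A$ together with chord-by-chord comparison yields $\Psi^\infty_{\gamma B}(x) \leq \gamma \Psi^\infty_A(x) \leq \gamma/2$, which exceeds $1/2$ by a factor of $1 + 1/(9d)$. My plan for closing this $O(1/d)$ gap is to exploit the slack in the bound $(2\gamma-1)^{d+1} \leq e^{4/9} \ll 2$: rerunning the proof of \cref{lem:Psi} with the hypothesis $\Psi^\infty \leq c$ for some $c$ slightly above $1/2$ costs only a constant factor in the loss exponent, which is still comfortably absorbed into the factor of $2$ in the corollary's conclusion.
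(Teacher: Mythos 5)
Your lower bound ($\Psi_B \geq \Psi_A/2$) is exactly the paper's: apply \cref{lem:Psi} to the sandwich $\frac{1}{\gamma}A \subset B \subset A$, first verifying $\Psi^\infty_A(x) \leq 1/2$ from $\Psi_A(x) \leq 1/(2d)$ via the second part of \cref{lem:concave-aux} (which is precisely your ``$\max \leq d \cdot \operatorname{avg}$'' step). The arithmetic $(2\gamma-1)^{d+1} < 2$ also matches.

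Your upper bound, however, takes a different route and contains a genuine gap. You dilate $B$ and apply \cref{lem:Psi} to $B \subset A \subset \gamma B$, which makes the outer body $\gamma B$. This forces you to control $\Psi^\infty_{\gamma B}(x) \leq 1/2$, and as you observe you can only get $\leq \gamma/2$. Your proposed repair is to re-derive \cref{lem:Psi} under a relaxed hypothesis $\Psi^\infty \leq c$ for $c$ slightly above $1/2$, but you do not carry this out, and it is not merely ``absorbed by arithmetic slack'': in the proof of \cref{lem:Psi} the contraction $\Lambda(z) = z/(2\gamma-1)$ is tuned to $\Psi^\infty \leq 1/2$ (the inclusion $r \in [\Lambda(z), z]$ breaks for $c > 1/2$ with the original $\Lambda$), so $\Lambda$, the determinant bound, and the final exponent would all need to be reworked. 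As written the proof is incomplete. You also have to establish $x \notin \gamma B$, which requires a separate argument (your contradiction argument is correct, but it is extra work that the paper's route never needs).

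The paper instead contracts $A$ rather than dilating $B$: it applies \cref{lem:Psi} to $\frac{1}{\gamma}B \subset \frac{1}{\gamma}A \subset B$ (valid since $B \subset A$ gives $\frac{1}{\gamma}B \subset \frac{1}{\gamma}A$, and $\frac{1}{\gamma}A \subset B$ is the hypothesis). Now the outer body whose $\Psi^\infty$ must be controlled is $B$ itself, and $\Psi^\infty_B(x) \leq \Psi^\infty_A(x) \leq 1/2$ is immediate from $B \subset A$; likewise $x \notin B$ is immediate. This yields $\Psi_B(x) \leq (2\gamma-1)^{d+1} \Psi_{A/\gamma}(x)$, and then \cref{lem:monotone} (applied to $A/\gamma$, with $\gamma \cdot (A/\gamma) = A$ and $x \notin A$) gives $\Psi_{A/\gamma}(x) \leq \Psi_A(x)$, closing the bound $\Psi_B(x) \leq (2\gamma-1)^{d+1}\Psi_A(x) < 2\Psi_A(x)$. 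No modification of \cref{lem:Psi}, no bound on a dilated body's $\Psi^\infty$, and no verification of $x \notin \gamma A$ is needed. I would recommend switching your upper bound to this contraction argument; it is both shorter and actually complete.
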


\begin{proof}
The second part of \cref{lem:concave-aux} (given next) and concavity of $z \mapsto \Psi_A(x, z)$ shows that $\Psi_A^\infty(x) \leq 1/2$.
By assumption, $\frac{1}{\gamma} A \subset B \subset A$, which implies that $\frac{1}{\gamma} B \subset \frac{1}{\gamma} A \subset B$ and 
so by \cref{lem:monotone,lem:Psi},
\begin{align*}
\left(2 \gamma - 1\right)^{2d+2} \Psi_B(x) 
\tag{\cref{lem:Psi}} &\geq \left(2\gamma - 1\right)^{d+1} \Psi_{A}(x) \\
\tag{\cref{lem:monotone}} &\geq \left(2\gamma - 1\right)^{d+1} \Psi_{A / \gamma}(x) \\
\tag{\cref{lem:Psi}} &\geq \Psi_B(x)\,,
\end{align*}
where the last inequality also uses the fact that $\Psi_B^\infty(x) \leq \Psi_A^\infty(x) \leq 1/2$.
The result follows from the choice of $\gamma$ and naive bounding.
\end{proof}

\begin{corollary}\label{lem:cont}
Suppose that $f \in \cF$ is minimised at $\zero$ and let $K_\epsilon = \{y \in \cK : f(y) \leq f_\star + \epsilon\}$.
Then, for any $x \neq \zero$, there exists an $\epsilon > 0$ such that
\begin{align*}
\Psi_{K_\epsilon}(x) = \frac{1}{64d}\,.
\end{align*}
\end{corollary}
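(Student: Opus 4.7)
The plan is to apply the intermediate value theorem to the map $\epsilon \mapsto \Psi_{K_\epsilon}(x)$ on the interval $(0, \bar\epsilon)$, where $\bar\epsilon = f(x) - f_\star$. Strong convexity of $f$ together with $x \neq \zero$ gives $\bar\epsilon > 0$, and for every $\epsilon \in (0, \bar\epsilon)$ the point $x$ lies outside $K_\epsilon$, so $\Psi_{K_\epsilon}(x)$ is well defined.

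First I would show $\Psi_{K_\epsilon}(x) \to 0$ as $\epsilon \to 0^+$. Strong convexity forces $K_\epsilon \subseteq \{y : \norm{y} \leq \sqrt{2\epsilon/m}\}$, so $K_\epsilon$ has diameter at most $2\sqrt{2\epsilon/m}$ and lies at distance at least $\norm{x} - \sqrt{2\epsilon/m}$ from $x$. Thus $\Psi_{K_\epsilon}^\infty(x)$, and a fortiori the average $\Psi_{K_\epsilon}(x)$, tends to $0$.

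Next I would produce some $\epsilon$ with $\Psi_{K_\epsilon}(x) > 1/(64d)$. Consider the chord from $x$ through $\zero$, and let $w$ and $y$ be its entry and exit points on $\partial K_\epsilon$. Restricting $f$ to this chord gives a convex function equal to $f_\star + \bar\epsilon$ at $x$ and $f_\star$ at $\zero$; the condition $f(w) = f_\star + \epsilon$ combined with convexity yields $\norm{x - w} \leq ((\bar\epsilon - \epsilon)/\bar\epsilon)\norm{x}$. Because $\zero \in K_\epsilon$ lies between $w$ and $y$, one has $\norm{x - y} \geq \norm{x}$, hence $\Psi_{K_\epsilon}(x, P_x(\zero)) \geq 1 - (\bar\epsilon - \epsilon)/\bar\epsilon$. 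By \cref{lem:psi-concave}, $z \mapsto \Psi_{K_\epsilon}(x, z)$ is non-negative and concave on the $(d-1)$-dimensional convex body $P_x(K_\epsilon)$. The standard averaging inequality for non-negative concave functions, obtained by shrinking $P_x(K_\epsilon)$ toward the point $P_x(\zero)$ and integrating the resulting super-level-set bound, gives
\begin{align*}
\Psi_{K_\epsilon}(x) \geq \frac{1}{d}\,\Psi_{K_\epsilon}(x, P_x(\zero)) \geq \frac{1}{d}\left(1 - \frac{\bar\epsilon - \epsilon}{\bar\epsilon}\right)\,,
\end{align*}
which exceeds $1/(64d)$ whenever $\epsilon > \bar\epsilon/64$.

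The main obstacle is establishing continuity of $\epsilon \mapsto \Psi_{K_\epsilon}(x)$ on $(0, \bar\epsilon)$. Strong convexity forces $K_\epsilon$ to vary continuously in the Hausdorff metric and to contain a fixed Euclidean neighbourhood of $\zero$ on any compact subinterval, which transfers to continuity of $P_x(K_\epsilon)$ and to uniform positivity of $\vol_{d-1}(P_x(K_\epsilon))$. For each $z$ in the interior of a limiting shadow, both $\pi_{K_\epsilon}(x, z)$ and its companion near endpoint on $\partial K_\epsilon$ depend continuously on $\epsilon$, so the integrand in the definition of $\Psi_{K_\epsilon}(x)$ converges pointwise. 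Since that integrand is bounded by $1$, dominated convergence gives continuity of the integral, and hence of $\Psi_{K_\epsilon}(x)$. The intermediate value theorem then produces the desired $\epsilon$.
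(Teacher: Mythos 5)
Your proof is correct in outline and does take a somewhat different route from the paper's. Both arguments rest on the intermediate value theorem applied to $\epsilon \mapsto \Psi_{K_\epsilon}(x)$, with strong convexity giving $\lim_{\epsilon \to 0^+} \Psi_{K_\epsilon}(x) = 0$, but the two halves differ. For the upper end, the paper applies IVT first to the monotone quantity $\Psi^\infty_{K_\epsilon}(x)$ to produce the smallest $\epsilon_{\max}$ with $\Psi^\infty_{K_{\epsilon_{\max}}}(x) = 1/64$, then invokes the second part of \cref{lem:concave-aux} to deduce $\Psi_{K_{\epsilon_{\max}}}(x) \ge 1/(64d)$; restricting to $(0, \epsilon_{\max}]$ also guarantees $\Psi^\infty \le 1/2$, which is needed for the continuity lemma used next. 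You instead bound $\Psi_{K_\epsilon}(x)$ directly by examining the single ray through the minimiser $\zero$, showing $\Psi_{K_\epsilon}(x, P_x(\zero)) \ge \epsilon/\bar\epsilon$, and then use concavity (\cref{lem:psi-concave}) plus the averaging inequality (equivalently the second part of \cref{lem:concave-aux}) to get $\Psi_{K_\epsilon}(x) \ge \epsilon/(d\bar\epsilon)$, which cleanly gives a value exceeding $1/(64d)$ once $\epsilon > \bar\epsilon/64$. This bypasses $\Psi^\infty$ entirely. The continuity step is where the approaches really diverge: the paper derives it quantitatively from \cref{cor:monotone} (hence ultimately from \cref{lem:monotone,lem:Psi}), whereas you give a soft argument via Hausdorff convergence of $K_\epsilon$, pointwise convergence of ray endpoints, and dominated convergence. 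Your soft argument is plausible and, if filled in carefully (in particular the pointwise convergence of $\Psi_{K_\epsilon}(x,z)$ for $z$ in the interior of the limiting shadow and the measure-continuity of $P_x(K_\epsilon)$), would work on all of $(0, \bar\epsilon)$ without the $\Psi^\infty \le 1/2$ restriction; on the other hand the paper's route reuses machinery it already needs elsewhere and so keeps the exposition shorter. The only substantive caveat is that the continuity paragraph, as written, is closer to a proof sketch than a proof, and you should be aware that this is the step the paper treats most carefully.
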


\begin{proof}
That $\epsilon \mapsto \Psi_{K_\epsilon}^\infty(x)$ is continuous and non-decreasing is straightforward.
Using the intermediate value theorem and the facts that $\lim_{\epsilon \to 0} \Psi_{K_\epsilon}(x) = 0$ and that $\Psi_{K_\epsilon}(x) = 1$ when $\epsilon$ is such that $x \in \partial K_\epsilon$,
there exists a smallest value $\epsilon_{\max}$ such that
\begin{align*}
\Psi_{K_{\epsilon_{\max}}}^\infty(x) = 1/64\,.
\end{align*}
By the second part of \cref{lem:concave-aux}, $\Psi_{K_{\epsilon_{\max}}}(x) \geq 1/(64d)$.
Strong convexity of $f$ ensures level sets contract to a point as $\epsilon$ tends to zero and hence, $\lim_{\epsilon \to 0} \Psi_{K_\epsilon}(x) = 0$.
Hence, by the intermediate value theorem it suffices to show that $\epsilon \mapsto \Psi_{K_\epsilon}(x)$ is continuous for $\epsilon \in (0, \epsilon_{\max}]$.
Let $\epsilon \in (0, \epsilon_{\max}]$ and $\gamma > 1$. By convexity of $f$,
$\frac{1}{\gamma} K_{\gamma \epsilon} \subset K_{\epsilon} \subset K_{\gamma \epsilon}$. Repeating the argument in the proof of \cref{cor:monotone} shows that 
$\Psi_{K_\epsilon}(x)$ tends to $\Psi_{K_{\gamma \epsilon}}(x)$ as $\gamma$ tends to $1$. 
\end{proof}

\paragraph{Properties of concave random variables}
The next two lemmas are probably known. They concern the law of a concave random variable under the uniform probability measure on the domain, which
is shown to have constant mass about its expectation.

\begin{lemma}\label{lem:concave-aux}
Let $A \subset \R^{d-1}$ be convex and $\varphi : A \to [0, \infty)$ be concave.
Then,
\begin{align*}
\frac{1}{\vol_{d-1}(A)} \int_A \varphi^2 \d{\vol_{d-1}}
&\leq 2^{5/2} \left(\frac{\int_A \varphi \d{\vol_{d-1}}}{\vol_{d-1}(A)}\right)^2 \,.
\end{align*}
Furthermore, $\max_{x \in A} \varphi(x) \leq \frac{d}{\vol_{d-1}(A)} \int_A \varphi \d{\vol_{d-1}}$.
\end{lemma}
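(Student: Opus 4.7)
I would prove (b) first by a cone-under-the-graph argument, then deduce (a) by reducing to a one-dimensional log-concave moment inequality via Brunn's principle, and finally settle that inequality by a single-crossing comparison against an exponential.

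For (b), consider the convex sub-graph $G = \{(x,t) \in \R^d : x \in A,\ 0 \le t \le \varphi(x)\}$, which is convex by concavity of $\varphi$. Pick any maximiser $x^\star \in A$ with $\varphi(x^\star) = M := \max_A \varphi$. Concavity gives $\varphi(\lambda x^\star + (1-\lambda) x) \ge \lambda M$ for all $x \in A$ and $\lambda \in [0,1]$, so $G$ contains the cone $C$ with apex $(x^\star, M)$ and base $A \times \{0\}$. Comparing $\vol_d(C) = M \vol_{d-1}(A)/d$ with $\vol_d(G) = \int_A \varphi \d{\vol_{d-1}}$ yields the stated bound.

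For (a), let $V(t) = \vol_{d-1}(\{\varphi \ge t\})$ and $V_0 = V(0) = \vol_{d-1}(A)$. Brunn's principle applied to the convex subgraph above tells me that $V^{1/(d-1)}$ is concave on $[0, M]$, and since the logarithm of a positive concave function is concave, this makes $V$ log-concave on its support. Combined with the layer-cake identities $\int_A \varphi \d{\vol_{d-1}} = \int_0^M V(t) \d{t}$ and $\int_A \varphi^2 \d{\vol_{d-1}} = 2\int_0^M t V(t) \d{t}$, the desired inequality reduces to the purely one-dimensional statement $\int_0^M t V(t) \d{t} \le (\int_0^M V(t) \d{t})^2 / V_0$.

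The main obstacle is this one-dimensional inequality, which I would handle by a single-crossing comparison with the exponential $V^\star(t) = V_0 e^{-at}$, where $a = V_0 / \int_0^M V(t) \d{t}$ is chosen so that $V^\star(0) = V_0$ and $\int_0^\infty V^\star(t) \d{t} = \int_0^M V(t) \d{t}$ (extending $V$ by zero past $M$). Log-concavity of $V$ ensures that $\log(V/V_0) + at$ is concave on $[0, M]$ and vanishes at $t = 0$, so it changes sign at most once; combined with the equal-integral constraint this forces $\Delta := V - V^\star$ to be non-negative on some initial interval $[0, t^\star]$ and non-positive on $[t^\star, \infty)$. A Chebyshev-type rearrangement then gives $\int t \Delta(t) \d{t} \le t^\star \int \Delta(t) \d{t} = 0$, so $\int_0^M t V(t) \d{t} \le \int_0^\infty t V^\star(t) \d{t} = V_0/a^2 = (\int_0^M V(t) \d{t})^2 / V_0$. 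Substituting back into the layer-cake identities yields $\int_A \varphi^2 \d{\vol_{d-1}} / V_0 \le 2 (\int_A \varphi \d{\vol_{d-1}} / V_0)^2$, which is comfortably below the claimed $2^{5/2}$.
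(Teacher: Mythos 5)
Your proof is correct and, for the second-moment inequality, takes a genuinely different route from the paper's. The paper symmetrises the subgraph to $B = \{(x,y) : x \in A,\ |y| \le \varphi(x)\}$, uses Brunn to get log-concavity of the slice profile $h(t)$, then applies Cauchy--Schwarz followed by a known second-moment bound for symmetric log-concave densities (corollary 2.24 in Tkocz's notes, $\int t^2 h \le 2(\int h)^3/h(0)^2$); the Cauchy--Schwarz step is lossy and is responsible for the factor $2^{5/2}$. You instead work with the one-sided subgraph, pass to the layer-cake profile $V(t) = \vol_{d-1}(\{\varphi \ge t\})$, get log-concavity of $V$ from Brunn exactly as the paper does for $h$, and then prove the one-dimensional moment bound from scratch via a single-crossing comparison of $V$ against the exponential $V_0 e^{-at}$ matched at the origin and in total mass. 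This is self-contained, bypasses Cauchy--Schwarz entirely, and yields the sharper constant $2$ in place of $2^{5/2}$, which comfortably implies the stated bound. Your single-crossing step is sound: $\log(V/V^\star)$ is concave on the support of $V$ and vanishes at $0$, so its positivity set is an interval $[0, t^\star]$; beyond the support $V = 0 < V^\star$, giving the required sign pattern for $\Delta = V - V^\star$, and then $(t - t^\star)\Delta(t) \le 0$ integrates to $\int t\Delta \le 0$. Part (b) is essentially the paper's cone argument, just applied to the one-sided subgraph rather than its symmetrisation. The only thing worth flagging explicitly in a write-up is the degenerate case $d = 1$ (where $A$ is a point and the Brunn exponent $1/(d-1)$ is undefined), but there the claim is trivial.
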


\begin{proof}
Let $B = \{(x, y) : x \in A, \, |y| \leq \varphi(x) \} \subset \R^d$, which is convex.
Define $\theta = (0,\ldots,0,1)$ and $h(t) = \vol_{d-1}(B \cap (\theta^\perp + t \theta))$.
By Brunn's concavity principle \cite[theorem 1.2.1]{ASG15}, $t \mapsto h(t)^{1/(d-1)}$ is concave on its support. By the arithmetic-geometric inequality, for $\lambda \in [0,1]$ and $s, t$ in the support of $h$,
\begin{align*}
h(\lambda s + (1 - \lambda) t)^{\frac{1}{d-1}} \geq \lambda h(s)^{\frac{1}{d-1}} + (1 - \lambda) h(t)^{\frac{1}{d-1}} \geq h(s)^{\frac{\lambda}{d-1}} h(t)^{\frac{1 - \lambda}{d-1}}\,, 
\end{align*}
which implies that $h$ is log-concave.
Then,
\begin{align*}
\frac{1}{\vol_{d-1}(A)} \int_A \varphi^2 \d{\vol_{d-1}}
&= \frac{1}{\vol_{d-1}(A)} \int_B |\ip{x, \theta}| \d{\vol_d}(x) \\
&\leq \frac{1}{\vol_{d-1}(A)} \left(\vol_d(B) \int_B \ip{x, \theta}^2 \d{\vol_d}(x)\right)^{1/2} \\
&= \frac{1}{\vol_{d-1}(A)} \left(\vol_d(B) \int_{-\infty}^\infty t^2 h(t) \d{t}\right)^{1/2} \\
&\leq \frac{1}{\vol_{d-1}(A)} \left(\frac{2\vol_d(B)}{h(0)^2} \left(\int_{-\infty}^\infty h(t) \d{t}\right)^3\right)^{1/2} \\
&= 2^{5/2} \left(\frac{\int_A \varphi \d{\vol_{d-1}}}{\vol_{d-1}(A)}\right)^2 \,,
\end{align*}
where the first inequality follows from Cauchy-Schwarz and the second from corollary 2.24 in the notes by \citeauthor{Tko18} \cite{Tko18} and log-concavity of $h$ and $\int_{-\infty}^\infty t h(t) \d{t} = 0$, which
holds because $h$ is symmetric.
The last equality follows since $h(0) = \vol_{d-1}(A)$ and $\int_{-\infty}^\infty h(t) \d{t} = \vol_d(B) = 2 \int_A \varphi \d{\vol_{d-1}}$.
For the second part, let $x \in \argmax_{y \in A} \varphi(y)$ and $C \subset B$ be the convex hull of $(x, \varphi(x))$, $(x, -\varphi(x))$ and $A \times \{0\}$, which is the union of two cones with the same base.
Then, 
\begin{align*}
\int_A \varphi \d{\vol_{d-1}}
&= \frac{1}{2} \vol_d(B) 
\geq \frac{1}{2} \vol_d(C) 
= \frac{\varphi(x) \vol_{d-1}(A)}{d}\,,
\end{align*}
where the second equality follows from the formula for the volume of a cone.
\end{proof}

\begin{lemma}\label{lem:concave}
Let $A \subset \R^{d-1}$ be convex and $\varphi : A \to [0, \infty)$ be concave and $\nu = \d{\vol_{d-1}} / \vol_{d-1}(A)$ be the uniform probability measure on $A$. Then,
\begin{align*}
\nu\left(\left\{x \in A : \frac{\varphi(x)}{\int_A \varphi \d{\nu}} \in [1/4, 16]\right\}\right) \geq 1/32 \,.
\end{align*}
\end{lemma}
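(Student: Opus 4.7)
The plan is to combine a Paley-Zygmund type lower tail bound with a standard Markov upper tail bound, using the second-moment estimate supplied by \cref{lem:concave-aux}. Let $\mu = \int_A \varphi \d{\nu}$ and think of $\varphi$ as a non-negative random variable under $\nu$. Then \cref{lem:concave-aux} gives exactly the reverse-Cauchy-Schwarz statement $\int_A \varphi^2 \d{\nu} \leq 2^{5/2} \mu^2$, i.e.\ the $L^2$ norm of $\varphi$ is comparable to its $L^1$ norm, which is precisely the kind of anti-concentration input needed for Paley-Zygmund to be useful.

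First I would handle the upper side: by Markov's inequality applied to the non-negative $\varphi$,
\begin{align*}
\nu\left(\varphi > 16\mu\right) \leq \frac{\mu}{16\mu} = \frac{1}{16}\,.
\end{align*}
Next I would handle the lower side via Paley-Zygmund with threshold $\theta = 1/4$. Writing $\varphi = \varphi \sind\{\varphi < \mu/4\} + \varphi \sind\{\varphi \geq \mu/4\}$ and applying Cauchy-Schwarz to the second piece,
\begin{align*}
\mu - \tfrac{1}{4}\mu \leq \int_A \varphi \sind\{\varphi \geq \mu/4\} \d{\nu} \leq \left(\int_A \varphi^2 \d{\nu}\right)^{1/2} \nu(\varphi \geq \mu/4)^{1/2}\,,
\end{align*}
which after squaring and using \cref{lem:concave-aux} yields $\nu(\varphi \geq \mu/4) \geq (9/16)/2^{5/2} = 9\sqrt{2}/128$.

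Finally I would combine the two bounds: the desired event contains the intersection of $\{\varphi \geq \mu/4\}$ with the complement of $\{\varphi > 16\mu\}$, so
\begin{align*}
\nu\left(\tfrac{1}{4}\mu \leq \varphi \leq 16\mu\right) \geq \frac{9\sqrt{2}}{128} - \frac{1}{16} = \frac{9\sqrt{2} - 8}{128}\,,
\end{align*}
and a quick check $9\sqrt{2} > 12$ gives the quantity is at least $4/128 = 1/32$, finishing the proof. There is no real obstacle here since all the hard work is already encoded in \cref{lem:concave-aux}; the only thing to be a bit careful about is getting the constants $1/4$ and $16$ to match the $2^{5/2}$ factor tightly enough that the resulting mass exceeds $1/32$, which the choices above just barely achieve.
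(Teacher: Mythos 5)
Your proof is correct and follows essentially the same route as the paper: a Markov bound for the upper tail, a Paley–Zygmund bound for the lower tail (you unwrap its Cauchy–Schwarz derivation rather than citing it by name), both driven by the second-moment estimate of \cref{lem:concave-aux}, and then combining to get mass at least $1/32$.
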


\begin{proof}
By Markov's inequality,
\begin{align*}
\nu\left(\left\{x \in A : \varphi(x) \leq 16 \int_A \varphi \d{\nu}\right\}\right) \geq 1 - \frac{1}{16}\,.
\end{align*}
For non-negative random variable $X$ and any $\theta \in (0,1)$, the Payley--Zygmund inequality says that $\bbP(X \geq \theta \E[X]) \geq (1 - \theta)^2 \E[X]^2/\E[X^2]$.
Combining this with \cref{lem:concave-aux} shows that
\begin{align*}
\nu\left(\left\{x \in A : \varphi(x) \geq \frac{1}{4} \int_A \varphi \d{\nu}\right\}\right) \geq \left(1 - \frac{1}{4}\right)^2 \frac{\left(\int_A \varphi \d{\nu}\right)^2}{\int_A \varphi^2 \d{\nu}} \geq \frac{9}{2^{13/2}} \geq \frac{3}{32}\,.
\end{align*}
Combining the previous displays yields the result.
\end{proof}

\section{Lipschitz and strong convexity relaxation}\label{sec:reduction}

The last ingredient of the proof is to show that
the Lipschitz and strong convexity assumptions are indeed mild.
More or less the same argument has been used elsewhere \cite{BLE17,BE18}.

\begin{proposition}\label{prop:reduce}
\cref{thm:regret} follows from \cref{thm:ids,thm:main}.
\end{proposition}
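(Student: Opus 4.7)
The plan is to reduce the game on $\cG$ to one on $\cF$ with only $O(1)$ extra regret and then combine \cref{thm:main} with \cref{thm:ids}.

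\textbf{Lipschitz reduction.} Let $\cK' = \{x \in \cK : \dist(x, \partial \cK) \geq 1/n\}$. A standard convexity calculation shows that every $f \in \cG$ is $n$-Lipschitz on $\cK'$: for $y \in \cK'$, a unit vector $v$, and $s \in (0, 1/n]$, the identity $y + sv = (1-sn) y + sn(y + v/n)$ combined with convexity gives $f(y+sv) - f(y) \leq sn (f(y + v/n) - f(y)) \leq sn$, with the symmetric bound in the other direction. Because $\cK$ contains a unit Euclidean ball centered at some $x_0$, the map $x \mapsto (1 - 1/n)x + (1/n)x_0$ sends $\cK$ into $\cK'$ and, by convexity, raises any $f \in \cG$ by at most $1/n$ pointwise; hence restricting both the learner's actions and the comparator to $\cK'$ costs at most $n \cdot (1/n) = 1$ in regret.

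\textbf{Strong convexity reduction.} Replace each $f_t$ by $\tilde f_t(x) = (f_t(x) + \frac{m}{2}\norm{x - x_0}^2) / Z$ with $m = 1/((n+1)\diam(\cK)^2)$ and $Z = 1 + \frac{m}{2}\diam(\cK)^2 \leq 2$, so that $\tilde f_t$ is convex, takes values in $[0,1]$, is $(m/Z)$-strongly convex, and is $O(n)$-Lipschitz on $\cK'$. The perturbation shifts each loss by at most $\frac{m}{2}\diam(\cK)^2 \leq 1/(2(n+1))$, contributing at most $1/2$ to the total regret, and the rescaling by $Z \leq 2$ inflates the regret by a constant factor that can be absorbed. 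The Lipschitz and strong-convexity constants of $\tilde f_t$ only match those in the definition of $\cF$ up to universal factors, which is harmless: running the reduction with slightly inflated parameters (say $2n$ in place of $n$) places the modified losses exactly in $\cF$ and only affects the universal constant in the final bound.

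\textbf{Combining the theorems.} Now \cref{thm:main} supplies an exploratory distribution satisfying the hypothesis of \cref{thm:ids} with $\alpha = 1/n$ and $\beta = \const \cdot d^4 \log(nd/m)$. Since $m = 1/((n+1)\diam(\cK)^2)$ and we may assume $d \leq n$ (otherwise the claimed bound is trivial because regret is always at most $n$), $\log(nd/m) = O(\log(n\diam(\cK)))$. Plugging into \cref{thm:ids} yields
\begin{align*}
\Reg_n^\star(\cF) \leq 4 + \sqrt{\const \cdot d^5 n \log^2(n\diam(\cK))} = O\bigl(d^{2.5} \sqrt{n} \log(n\diam(\cK))\bigr),
\end{align*}
and adding the $O(1)$ overhead from the reduction gives the bound in \cref{thm:regret}. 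The only part requiring attention is matching the Lipschitz and strong-convexity constants produced by the reduction to those stipulated in $\cF$; this is routine bookkeeping and does not affect the rate.
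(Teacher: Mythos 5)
Your proposal is correct and follows essentially the same route as the paper: shrink the domain to a $1/n$-inset $\cJ$ to obtain $n$-Lipschitzness, add a small quadratic of the scale $m=1/((n+1)\diam(\cK)^2)$ to obtain $m$-strong convexity, argue the regret overhead is $O(1)$ via the contraction $x \mapsto (1-1/n)x + x_0/n$, then plug the resulting $\alpha,\beta$ into \cref{thm:ids}. The one place where the paper is tighter is the normalisation of the perturbed loss: the paper uses $\hat f_t = \tfrac{n}{n+1}\bigl(f_t + \tfrac{1}{n}(\norm{x}/\diam(\cK))^2\bigr)$, which by a short subgradient argument lands \emph{exactly} in $\cF$ (range $[0,1]$, $n$-Lipschitz on $\cJ$, $m$-strongly convex), whereas your normalisation by $Z = 1 + \tfrac{m}{2}\diam(\cK)^2$ produces an $(n+1)$-Lipschitz function and you defer the mismatch to ``routine bookkeeping.'' You correctly identify that this only moves universal constants — the $n$-Lipschitz assumption enters \cref{thm:main} only through the lower bound on $\norm{x_\star}$ in its Step 2, and enters \cref{thm:ids} only through the discretisation error, both of which degrade benignly under a constant inflation of the Lipschitz constant — but a fully rigorous writeup would either redo that bookkeeping or, more cleanly, adopt the paper's $\tfrac{n}{n+1}$ scaling so that the modified losses are literally members of $\cF$ as defined.
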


\begin{proof}
Let $\cF$ be the set of $n$-Lipschitz and $m$-strongly convex functions from convex body $\cJ \subset \R^d$ to $[0,1]$.
\cref{thm:ids,thm:main} show that
\begin{align}
\Reg_n^\star(\cF) \leq \const \cdot d^{2.5} \sqrt{n} \log(n d \diam(\cJ) / m) \,.
\label{eq:J}
\end{align}
\cref{thm:regret} assumes that $\cK$ contains a unit-radius Euclidean ball.
By translation we may assume that $\{x \in \R^d : \norm{x} \leq 1\} \subset \cK$.
Let $(f_t)_{t=1}^n$ be an arbitrary sequence of convex functions from $\cK$ to $[0,1]$, possibly non-Lipschitz and non-strongly convex.
Define $\dist(x, A) = \min_{y \in A} |x - y|$ and $\cJ = \{x \in \cK : \dist(x, \partial \cK) \geq 1/n\}$, which is a convex subset of $\cK$.
Next, let $(\hat f_t)_{t=1}^n$ be the sequence of convex functions from $\cK \to [0,1]$ given by
\begin{align*}
\hat f_t(x) = \left(\frac{n}{n+1}\right) \left(f_t(x) + \frac{1}{n} \left(\frac{\norm{x}}{\diam(\cK)}\right)^2\right) \in [0,1]\,.
\end{align*}
That $\hat f_t$ is $m$-strongly convex with $m = 1/((n+1) \diam(\cK)^2)$ and $\hat f_t(x) \in [0,1]$ is immediate.
To see that $\hat f_t$ is $n$-Lipschitz on $\cJ$, let $g \in \partial \hat f_t(x)$ be a subgradient of $\hat f_t$ for $x \in \cJ$ and let $\alpha > 0$ be such that $x + \alpha g / \norm{g} \in \partial \cK$.
Then, by convexity and boundedness of $\hat f_t$, $1 \geq \hat f_t(x + \alpha g / \norm{g}) - \hat f_t(x) \geq \ip{g, \alpha g / \norm{g}} = \alpha \norm{g}$. 
By the definition of $\cJ$, $\alpha \geq 1/n$ and hence $\norm{g} \leq n$.
Running a policy witnessing \cref{eq:J},
\begin{align}
\const \cdot d^{2.5} &\sqrt{n} \log(n d \diam(\cJ) / m)
\geq \max_{y \in \cJ} \E\left[\sum_{t=1}^n \hat f_t(x_t) - \hat f_t(y)\right] \nonumber \\
&\geq \frac{n}{n+1} \left[\max_{y \in \cJ} \E\left[\sum_{t=1}^n f_t(x_t) - f_t(y)\right] - 1\right] \nonumber \\
&\geq \frac{n}{n+1} \left[\max_{x \in \cK} \E\left[\sum_{t=1}^n f_t(x_t) - f_t(x)\right] - 2\right]\,.  \label{eq:red}
\end{align}
Only the last inequality presents any challenge.
To see why this is true, let $x \in \cK$ be the minimiser of $\sum_{t=1}^n f_t$ on $\cK$ and
let $y = (1 - 1/n) x$.
Since $\{x : \norm{x} \leq 1\} \subset \cK$, it follows that $y \in K$.
Furthermore, since $f_t$ is convex and bounded in $[0,1]$, 
\begin{align*}
f_t(y) 
&\leq (1 - 1/n) f_t(x) + \frac{1}{n} f_t(\zero) \leq f_t(x) + \frac{1}{n}\,. 
\end{align*}
The result follows by rearranging \cref{eq:red}, noting that $\diam(\cJ) \leq \diam(\cK)$ and by substituting the value of $m$.
Note, the dependence on the dimension in the logarithm is dropped, since when $d \geq n$ the regret is better bounded by $n$.
\end{proof}

\appendix

\section{Proof of \cref{thm:ids}}\label{app:ids}

For completeness, we outline the key steps in the proof of \cref{thm:ids}, giving the main arguments and referring to the specific parts
of the literature where necessary. The best reference is the article by \citeauthor{BE18} \cite{BE18}.

\paragraph{Discretisation}
Let $\cC \subset \cK$ be a finite collection of points such that for all $x \in \cK$ there exists a $y \in \cC$ with 
$|x - y| \leq \epsilon \triangleq 1/(n^2 \max(1, (2 \beta)^{1/2}))$.
Standard covering number results \cite[corollary 4.1.14]{ASG15} show that $\cC$ may be chosen so that 
\begin{align}
\log |\cC| \leq d \log\left(\frac{3 \diam(\cK)}{\epsilon}\right)\,. \label{eq:cover}
\end{align}
Let $\Pi(x) = \argmin_{y \in \cC} \norm{x - y}$
and $y^\star = \argmin_{y \in \cC} \sum_{t=1}^n f_t(y)$. Since functions $f \in \cF$ are $n$-Lipschitz, for any policy, 
\begin{align}
\Reg_n = \max_{x^\star \in \cK} \E\left[\sum_{t=1}^n f_t(x_t) - f_t(x^\star)\right] \leq 1 + \E\left[\sum_{t=1}^n f_t(x_t) - f_t(y^\star)\right]\,.
\label{eq:approx}
\end{align}
Exploratory distributions can also be discretised. Given a distribution $\mu$ on $\cF$ and an exploratory distribution $\rho$, let $\xi = \rho \circ \Pi^{-1}$, which
is supported on $\cC$. Then, repeatedly using the assumption that $f \in \cF$ are $n$-Lipschitz and naive bounding,
\begin{align}
\int_\cK \bar f \d{\xi} - \int_\cF f_\star \d{\mu}(f)
&\leq \frac{1}{n} + \int_\cK \bar f \d{\rho} - \int_{\cF} f_\star \d{\mu}(f) \nonumber \\
&\leq \frac{1}{n} + \alpha + \sqrt{\beta \int_{\cF} \int_{\cK} (\bar f - f)^2 \d{\rho} \d{\mu}(f)} \nonumber \\
&\leq \frac{2}{n} + \alpha + \sqrt{2\beta \int_{\cF} \int_{\cK} (\bar f - f)^2 \d{\xi} \d{\mu}(f)} \,. 
\label{eq:disc}
\end{align}
Hence, $\xi$ is a good exploratory distribution with slightly larger constants than $\rho$.

\paragraph{Minimax duality}
Using the assumption that $f \in \cF$ are $n$-Lipschitz, the definition of the discretisation and then a minimax theorem \cite[theorem 1]{LS19pminfo}, we obtain 
\begin{align}
\Reg_n^\star(\cF) 
&\leq 1 + \inf_{\textrm{$\cC$-policies}} \sup_{(f_t)_{t=1}^n \in \cF^n} \E\left[\sum_{t=1}^n f_t(x_t) - f_t(y^\star)\right] \nonumber \\
&= 1 + \sup_{\nu} \inf_{\textrm{$\cC$-policies}} \underbracket{\E\left[\sum_{t=1}^n f_t(x_t) - f_t(y^\star)\right]}_{\BReg_n}\,. \label{eq:minimax} 
\end{align}
where the inf over policies is restricted to policies playing in $\cC$ and the
sup on the right-hand side is over all finitely supported distributions on $\cF$. The expectation on the right-hand side is now over the randomness in both $(x_t)_{t=1}^n$
and $(f_t)_{t=1}^n$, with the latter sampled from $\nu$.
Note, the comparator $y^\star$ is now a random variable.
The expectation on the right-hand side is called the Bayesian regret, denoted by $\BReg_n$, which depends on prior and policy.
For the remainder of this section we fix a prior $\nu$ and design a policy for which the Bayesian regret with respect to $\nu$ is bounded
in terms of $n$, $d$, $\diam(\cK)$ and the constants $\alpha$ and $\beta$ that define the conditions for an exploratory distribution in \cref{thm:ids}.

Note, \citeauthor{BDKP15} \cite{BDKP15} also propose a minimax theorem for convex bandits without a discretisation argument, which inspired the argument for partial monitoring quoted above \cite{LS19pminfo}.
Some details in the former proof are missing, however, such as the choice of topology on the space of convex functions and the continuity of the regret as a function of a deterministic policy.

\paragraph{Posterior, policy and information ratio}
The policy will sample $x_t$ from a probability measure $\xi_t$ on $\cC$, where $\xi_t$ is an exploratory distribution provided by combining
the conditions of \cref{thm:ids} and \cref{eq:disc}.
Let $\E_t$ be the conditional expectation with respect to $(x_s)_{s=1}^t$ and $(f_s(x_s))_{s=1}^t$.
Then, for each $y \in \cC$ and $x \in \cK$, let
\begin{align*}
f_{t,y}(x) &= \E_{t-1}[f_t(x) | y^\star = y] &
\bar f_t(x) &= \E_{t-1}[f_t(x)]\,. 
\end{align*}
Convexity, $m$-strong convexity and $n$-Lipschitzness are preserved under averaging, so $f_{t,y}$ and $\bar f_t$ are in $\cF$ almost surely.
Next, let $\mu_t$ be the finitely supported probability distribution on $\cF$ for which 
$\mu_t(\{f_{t,y}\}) = \E_{t-1}[\sind_{y^\star = y}]$.
By the conditions in \cref{thm:ids} and \cref{eq:disc}, there exists a probability measure $\xi_t$ supported on $\cC$ such that
\begin{align*}
\int_\cK \bar f_t \d{\xi_t} - \int_{\cF} f_\star \d{\mu_t}(f) \leq \frac{2}{n}  + \alpha + \sqrt{2\beta \int_{\cF} \int_{\cK} (\bar f_t(x) - f(x))^2 \d{\xi_t}(x) \d{\mu_t}(f)} \,.
\end{align*}
Then, considering the policy that samples $x_t$ from $\xi_t$ and using \cref{eq:approx}, 
\begin{align*}
\BReg_n
&= \E\left[\sum_{t=1}^n \int_{\cK} \bar f_t \d{\xi_t} - \int_{\cF} f(y^\star) \d{\mu_t}(f)\right] \\
&\leq \E\left[\sum_{t=1}^n \int_{\cK} \bar f_t \d{\xi_t} - \int_{\cF} f_\star \d{\mu_t}(f)\right] \\
&\leq 2 + n \alpha + \E\left[\sum_{t=1}^n \sqrt{2\beta \vphantom{\int_{\cF}} \smash{ \underbracket{\int_{\cF} \int_{\cK} (\bar f_t(x) - f(x))^2 \d{\xi_t}(x) \d{\mu_t}(f)}_{v_t}}}\;\right] \,.
\end{align*}
The final step is the bound
\begin{align*}
\E\left[\sum_{t=1}^n \sqrt{2\beta v_t}\right] 
\leq \sqrt{2n \beta \E\left[\sum_{t=1}^n v_t\right]}
&\leq \sqrt{n \beta \log(|\cC|)}\,,
\end{align*}
where the first inequality is Jensen's and the second follows from Pinsker's inequality to bound the squared difference of expectations in $v_t$ and the chain rule for mutual information \cite[lemma 5]{BDKP15}.
Therefore,
\begin{align*}
\BReg_n \leq 2 + n\alpha + \sqrt{n \beta \log |\cC|}\,. 
\end{align*}
Since this bound is independent of the prior $\nu$, it follows from \cref{eq:minimax} and \cref{eq:cover} that
\begin{align*}
\Reg_n^\star(\cF) \leq 3 + n \alpha + \sqrt{\beta d n \log\left(3n^2 \max(1, (2\beta)^{1/2}) \diam(\cK)\right)}\,.
\end{align*}

\paragraph{Acknowledgements}
Many thanks to my colleagues Andr\'as Gy\"orgy and Marcus Hutter for their useful suggestions.

\bibliographystyle{plainnat}
\bibliography{all}

\end{document}